\newtheorem{theorem}{Theorem}[section]
\newtheorem{example}{Example}[section]
\newtheorem{lemma}[theorem]{Lemma}
\newtheorem{proposition}[theorem]{Proposition}
\theoremstyle{definition}
\theoremstyle{remark}
\numberwithin{equation}{section}
\begin{document}
\title{A new method for fast computing unbiased estimators of cumulants
}
\author{E. Di Nardo}%
\address{Dipartimento di Matematica e Informatica, Universit\`a
        degli Studi della Basilicata, Viale dell'Ateneo Lucano 10, 85100, Potenza,
        Italy}%
\email{elvira.dinardo@unibas.it}%
\author{G. Guarino}%
\address{Medical School, Universit\`a Cattolica del Sacro Cuore
(Rome branch), Largo Agostino Gemelli 8, I-00168, Roma, Italy.}%
\email{giuseppe.guarino@rete.basilicata.it}%
\author{D. Senato}%
\address{Dipartimento di Matematica e Informatica,
        Universit\`a degli Studi della Basilicata, Viale dell'Ateneo Lucano 10, 85100, Potenza, Italy.}%
\email{domenico.senato@unibas.it}%
\keywords{univariate and multivariate $k$-statistics, univariate
and multivariate polykays, umbral calculus. \\
{\tt AMS-Primary:} 65C60, 05A40, {\tt AMS-Secondary: } 68W30, 62H99}%

\begin{abstract}
We propose new algorithms for generating $k$-statistics,
multivariate $k$-statistics, polykays and multivariate polykays.
The resulting computational times are very fast compared with
procedures existing in the literature. Such speeding up is
obtained by means of a symbolic method arising from the classical
umbral calculus. The classical umbral calculus is a light syntax
that involves only elementary rules to managing sequences of
numbers or polynomials. The cornerstone of the procedures here
introduced is the connection between cumulants of a random
variable and a suitable compound Poisson random variable. Such a
connection holds also for multivariate random variables.
\end{abstract}
\maketitle
\section{Introduction}
In the last decades, symbolic methods have been successfully used
in different mathematical areas \citep{Grossman, Wang}. Symbolic
techniques have been recently used in problems arising from
computational statistics (see \citet{Andrews2},
\citet{Zeilrberg}). The papers \citep{Dinardo2} and
\citep{Dinardo4} lie within this field. In these papers, the
theory of $k$-statistics and polykays was completely rewritten,
carrying out a unifying framework for these estimators, both in
the univariate and multivariate cases.
\par
This subject goes back to Fisher \citeyearpar{Fisher} and up to
today it was treated by means of different languages. Main
references are \citet{Stuart}, \citet{Speed1, Speed2},
\citet{McCullagh}. A more accurate list of references can be found
in \citet{Dinardo4}.
\par
The umbral techniques, investigated in \citet{Dinardo4}, have
allowed us to implement a single algorithm for $k$-statistics,
multivariate $k$-statistics, polykays and multivariate polykays
\citep{Dinardo3}. Nevertheless, the elegance of a unifying outlook
pays a price in computational costs that become comparable with
those of {\tt MathStatica} \citep{MathStatica} for polykays and
not competitive for univariate and multivariate $k$-statistics.
\par
By developing the ideas introduced in \cite{Dinardo4} for
$k$-statistics, in this paper we introduce radically innovative
procedures for generating all these estimators, by realizing a
substantial improvement of computational times compared with those
in the literature.
\par
A frequently asked question is: why are these calculations
relevant? Usually higher order objects require enormous amounts of
data to estimate with any accuracy. Nevertheless, there are
different areas, such as astronomy, astrophysics and biophysics,
which need to compute high order $k$-statistics in order to
recognizing a gaussian population \citep{Ferreira}. Undoubtedly,
an enjoyable challenge is to have efficient procedures to deal
with the involved huge amount of algebraic and symbolic
computations.
\par
The algorithms proposed here are based on the umbral language
introduced by \citet{SIAM}. Applications of the classical umbral
calculus are given in \cite{Zeilrberg1} $\div$
(\citeyear{Zeilrberg4}), where generating functions are computed
for many difficult problems dealing with counting combinatorial
objects. Applications to bilinear generating functions for
polynomial sequences are given in \cite{Gessel}.
\par
\citeauthor{Dinardo1} (\citeyear{Dinardo}) have developed the
classical umbral calculus (\citeyear{SIAM}), with special care to
probabilistic aspects. The basic device is the representation of a
unital sequence of numbers or polynomials by a symbol $\alpha,$
called an {\it umbra}. The umbra $\alpha$ is related to these
unital sequences via an operator $E$ that resembles the
expectation operator of random variables (r.v.'s). This symbolic
method provides a light syntax for handling cumulants and
factorial moments \citep{Dinardo1}, $k$-statistics then come in
hand since these are the unique symmetric unbiased estimators of
cumulants \citep{Dinardo2}. These estimators are expressed in
terms of power sums in the variables of the random sample.
\par
After recalling basic notions of the umbral language, in Section 3
we show that any cumulant can be evaluated via cumulants of a
suitable umbra. In probabilistic terms, cumulants of a r.v. can be
obtained via cumulants of a suitable compound Poisson r.v. This
link allows us the significant speed up of the algorithms for
$k$-statistics and polykays. For multivariate cumulants, the basic
tool is given by umbrae indexed by multisets. In Section 4, we
recall this symbolic device, introduced and largely used in
\citet{Dinardo4}. In Section 5, we show the connection between
multivariate cumulants of an umbra and a suitable multivariate
compound Poisson r.v. We then summarize the algorithms for
generating multivariate $k$-statistics and polykays. Finally, we
compare the computational times of the algorithms proposed here
with those of {\tt MathStatica} \citep{MathStatica} and those of
\citet{Andrews2}. Note that {\tt MathStatica} does not have a
procedure for multivariate polykays. All programs have been
executed on a PC Pentium(R)4 Intel(R), CPU 2.08 Ghz, 512MB Ram
with {\tt Maple} version 10.0 and {\tt Mathematica} version 4.2.
We choose the {\tt Maple} language due to its acknowledged
plainness in translating symbolic computations.
\section{Background to umbral calculus}
This section is aimed to recalling notation and terminology useful
to handle umbrae. More details and  technicalities can be found in
\citet{Dinardo, Dinardo1}.
\par
Formally, an umbral calculus is a syntax consisting of the
following data:
\begin{enumerate}
\item[{\it i)}] a set $A=\{\alpha,\beta, \ldots \},$ called the
{\it alphabet}, whose elements are named {\it umbrae}; \item[{\it
ii)}] a commutative integral domain $R$ whose quotient field is of
characteristic zero\footnote{For statistical applications, $R$ is
the field of real numbers.}; \item[{\it iii)}] a linear functional
$E,$ called the {\it evaluation}, defined on the polynomial ring
$R[A]$ and taking values in $R$ such that
\begin{enumerate}
\item[{\it a)}] $E[1]=1;$ \item[{\it b)}] $E[\alpha^i \beta^j
\cdots \gamma^k] = E[\alpha^i]E[\beta^j] \cdots E[\gamma^k]$ for
any set of distinct umbrae in $A$ and for $i,j,\ldots,k$
nonnegative integers ({\it uncorrelation property});
\end{enumerate} \item[{\it iv)}] an element $\varepsilon \in A,$
called the {\it augmentation}, such that $E[\varepsilon^n] = 0$
for every $n \geq 1;$ \item[{\it v)}] an element $u \in A,$ called
the {\it unity} umbra, such that $E[u^n]=1,$ for every $n \geq 1.$
\end{enumerate}
An {\it umbral polynomial} is a polynomial $p \in R[A].$ The
support of $p$ is the set of all umbrae occurring in $p.$ If $p$
and $q$ are two umbral polynomials, $p$ and $q$ are {\it
uncorrelated} if and only if their supports are disjoint. The
umbral polynomials $p$ and $q$ are {\it umbrally equivalent} if
and only if
$$E[p]=E[q], \quad \hbox{in symbols}\,\, p \simeq q.$$
\par
The {\it moments} of an umbra $\alpha$ are the elements $a_n \in
R$ such that
$$E[\alpha^n]=a_n, \,\, \forall \, n \geq 0$$
and we say that the umbra $\alpha$ {\it represents} the sequence
of moments $1,a_1,a_2,\ldots.$
\par
It is possible that two distinct umbrae represent the same
sequence of moments, in this case they are called {\it similar
umbrae}. More formally, two umbrae  $\alpha$ and $\gamma$ are said
to be similar when
$$E[\alpha^n]=E[\gamma^n] \,\,\, \forall \, n \geq
0, \quad \hbox{in symbols} \,\,  \alpha \equiv \gamma.$$ In
addition, given a sequence $1,a_1,a_2,\ldots$ in $R$ there are
infinitely many distinct and thus similar umbrae representing the
sequence.
\par
The {\it factorial moments} of an umbra $\alpha$ are the elements
$a_{(n)} \in R$ corresponding to umbral polynomials $(\alpha)_n =
\alpha (\alpha -1) \cdots (\alpha-n+1),$ for each $n \geq 1$ via
the evaluation $E,$ that is $E[(\alpha)_n]=a_{(n)}.$
\par
Two more special umbrae have been defined in the alphabet $A$: the {\it singleton}
umbra $\chi$ and the {\it Bell} umbra $\beta.$
\par
The singleton umbra $\chi$ is the umbra whose moments are all
zero, except the first $E[\chi]=1.$ As shown in \citet{Dinardo1},
its factorial moments are
\begin{equation}
E[(\chi)_n] = x_{(n)} = (-1)^{n-1}(n-1)!, \quad \forall \, n \geq
1. \label{(chifact)}
\end{equation}
The Bell umbra $\beta$ is the umbra whose factorial
moments are all equal to $1,$
\begin{equation}
E[(\beta)_n] = b_{(n)} = 1, \quad \forall \, n \geq 1.
\label{(bellfact)}
\end{equation}
Its moments are the Bell numbers. The umbra $\beta$ is therefore
the umbral counterpart of a Poisson r.v. with parameter $1$
\citep{Dinardo}.
\par
Thanks to the notion of similar umbrae, it is possible to extend
the alphabet $A$ with the so-called {\it auxiliary umbrae},
obtained via operations among similar umbrae. As a consequence, a
{\it saturated umbral calculus} can be constructed where auxiliary
umbrae are treated as elements of the alphabet \citep{SIAM}. Let
$\{\alpha_1,\alpha_2,\ldots,\alpha_n\}$ be a set of $n$
uncorrelated umbrae, similar to an umbra $\alpha.$ The symbol
$n.\alpha$ denotes an auxiliary umbra similar to the sum
$\alpha_1+\alpha_2+\cdots+\alpha_n$ and called the {\it dot
product} between the integer $n$ and the umbra $\alpha.$ Powers of
$n.\alpha$ are umbrally equivalent to the following umbral
polynomials \citep{Dinardo4}:
\begin{equation}
(n.\alpha)^i \simeq \sum_{\lambda \vdash i}(n)_{\nu_{\lambda}}
d_{\lambda} \alpha_{\lambda}, \label{(momdot1)}
\end{equation}
where the sum is over all partitions \footnote{Recall that a
partition of an integer $i$ is a sequence
$\lambda=(\lambda_1,\lambda_2,\ldots,\lambda_t),$ where
$\lambda_j$ are weakly decreasing integers and $\sum_{j=1}^t
\lambda_j = i.$ The integers $\lambda_j$ are named {\it parts} of
$\lambda.$ By the symbol $\nu_{\lambda}$ we denote the {\it
length} of $\lambda,$ that is  the number of its parts. A
different notation is $\lambda=(1^{r_1},2^{r_2},\ldots),$ where
$r_j$ is the number of parts of $\lambda$ equal to $j$ and $r_1 +
r_2 + \cdots = \nu_{\lambda}.$ We use the classical notation
$\lambda \vdash i$ to denoting \lq\lq$\lambda$ is a partition of
$i$\rq\rq.} $\lambda = (1^{r_1},2^{r_2},\ldots)$ of the integer
$i,$ $(n)_{\nu_{\lambda}} = 0$ for $\nu_{\lambda} > n$ and
\begin{equation} d_{\lambda}  = \frac{i!}{r_1!r_2!\cdots} \,
\frac{1}{(1!)^{r_1}(2!)^{r_2}\cdots} \quad \hbox{and} \quad
\alpha_{\lambda} \equiv (\alpha_{j_1})^{.r_1}
(\alpha_{j_2}^2)^{.r_2} \cdots,  \label{(not)}
\end{equation}
with $\{j_i\}$ distinct integers chosen in $\{1,2,\ldots,n\}.$ By
evaluating equivalence (\ref{(momdot1)}) via the linear functional
$E,$ we have
\begin{equation}
E[(n.\alpha)^i]=  \sum_{\lambda \vdash i}(n)_{\nu_{\lambda}}
d_{\lambda} a_{\lambda}, \label{(momdot)}
\end{equation}
where $a_{\lambda} = a_1^{r_1} \, a_2^{r_2} \, \cdots.$ Note that
if $\lambda=(1^{r_1}, 2^{r_2}, \ldots)$ is a partition  of the
integer $r,$ $\eta=(1^{s_1}, 2^{s_2}, \ldots)$ is a partition of
the integer $s$ and $\lambda + \eta = (1^{r_1 + s_1}, 2^{r_2 +
s_2}, \ldots),$ then
$$a_{\lambda + \eta} = a_{\lambda}a_{\eta} \quad \hbox{and} \quad \alpha_{\lambda + \eta} \simeq
\alpha_{\lambda}\alpha_{\eta}.$$ Properties of the auxiliary umbra
$n.\alpha$ have been extensively described in \citet{Dinardo} and
these will be recalled whenever it is necessary. It is interesting
to remark that $\alpha.n \equiv n \alpha,$ as proved in
\citet{Dinardo1}, in agreement with the meaning of the dot
product.
\par
A feature of the classical umbral calculus is the construction of
new auxiliary umbrae by suitable symbolic replacements. For
example, if we replace the integer $n$ in $n.\alpha$ by an umbra
$\gamma,$ equivalence (\ref{(momdot1)}) gives
\begin{equation}
(\gamma.\alpha)^i \simeq \sum_{\lambda \vdash i}
(\gamma)_{\nu_{\lambda}} d_{\lambda} \alpha_{\lambda}.
\label{(ombdot1)}
\end{equation}
Equivalence (\ref{(ombdot1)}) has been formally proved by using
the notion of generating function of an umbra, for further details
see \citep{Dinardo}. Note that, contrary to what happens with
$n.\alpha,$ in the dot product $\alpha.n$ the substitution of $n$
with an umbra $\gamma$ does not inherit the symbolic expression of
moments. As it is straightforward to show via (\ref{(ombdot1)}),
the dot product is therefore not commutative. This circumstance
justifies also the falling off of the right distributive law in
the dot product, so that
$$(\alpha + \delta).\gamma \equiv \alpha.\gamma + \delta.\gamma \quad \hbox{whereas}
\quad \gamma.(\alpha + \delta) \not \equiv \gamma.\alpha +
\gamma.\delta,$$ for $\alpha, \gamma, \delta \in A.$ Actually, by
considering the parallelism with the r.v.'s theory, the dot
product $\gamma.\alpha$ corresponds to a random sum, and the right
distributive law falls off similarly to what it happens for random
sums.
\par
In the dot product $\gamma.\alpha,$ by replacing the umbra
$\gamma$ by  the umbra $\gamma.\beta,$ we obtain the so-called
{\it composition} umbra of $\alpha$ and $\gamma,$ that is
$\gamma.\beta.\alpha,$ whose powers are
\begin{equation}
(\gamma.\beta.\alpha)^i \simeq \sum_{\lambda \vdash i}
\gamma^{\nu_{\lambda}} d_{\lambda} \alpha_{\lambda}.
\label{(comp)}
\end{equation}
The {\it compositional inverse} of an umbra $\alpha$ is the umbra
$\alpha^{<-1>}$ satisfying
$$\alpha^{<-1>}.\beta.\alpha \equiv \alpha.\beta.\alpha^{<-1>} \equiv
\chi.$$ In the following examples, some other fundamental
auxiliary umbrae are characterized by means of equivalence
(\ref{(ombdot1)}). The properties we are going to recall are
proved in \citet{Dinardo1, Dinardo2}.
\begin{example} The $\alpha$-partition umbra.
{\rm If $\beta$ is the Bell umbra, the umbra $\beta.\alpha$  is
called the $\alpha$-partition umbra. By taking into account
(\ref{(ombdot1)}) and (\ref{(bellfact)}), its powers are
\begin{equation}
(\beta.\alpha)^i \simeq \sum_{\lambda \vdash i} d_{\lambda}
\alpha_{\lambda}. \label{(alfapart)} \end{equation} By equivalence
(\ref{(alfapart)}), we have
\begin{equation}
\beta.u^{<-1>} \equiv \chi, \qquad \beta.\chi \equiv u, \label{(set)}
\end{equation}
where $u^{<-1>}$ denotes the compositional inverse of $u.$ The umbra
$\beta.\alpha$ corresponds to a compound Poisson r.v. of parameter $1.$}
\end{example}
\begin{example} \label{Ex1} The $\alpha$-cumulant umbra.
{\rm If $\chi$ is the singleton umbra, the umbra $\chi.\alpha$ is
called the $\alpha$-cumulant umbra. By virtue of equivalence
(\ref{(ombdot1)}), its powers are
\begin{equation}
(\chi.\alpha)^i \simeq \sum_{\lambda \vdash i} x_{(\nu_{\lambda})}
\, d_{\lambda} \, \alpha_{\lambda}, \label{(kstat)}
\end{equation} where
$x_{(\nu_{\lambda})}$ are the factorial moments (\ref{(chifact)})
of the umbra $\chi$. By taking into account (\ref{(kstat)}), the
following equivalences follow
$$\chi.\beta \equiv u, \qquad \chi.\chi \equiv u^{<-1>}.$$
Equivalence (\ref{(kstat)}) recalls the well-known expression of
cumulants $\kappa_1,\kappa_2,\ldots$ in terms of moments
$a_1,a_2,\ldots$ of a r.v.
\begin{equation}
\kappa_i=\sum_{\lambda \vdash i} (-1)^{\nu_{\lambda}-1} (\nu_{\lambda}-1)!
\, d_{\lambda} \, a_{\lambda}. \label{(conn)}
\end{equation}
The moments of the $\alpha$-cumulant umbra $\chi.\alpha$ are
therefore  called {\it cumulants} of the umbra $\alpha.$}
\end{example}
\begin{example} The $\alpha$-factorial umbra.
{\rm The umbra $\alpha.\chi$ is called the $\alpha$-factorial
umbra and its moments are the factorial moments of $\alpha,$ since
the following equivalence holds
$$(\alpha.\chi)^i \simeq (\alpha)_i.$$}
\end{example}

The commutative integral domain $R$ may be replaced by a
polynomial ring in any number of indeterminates, having
coefficient in a field $K$ of characteristic zero. Suppose
therefore to replace $R$ with the polynomial ring $K[y],$ where
$y$ is an indeterminate. The uncorrelation property  {\it iii)}
must be rewritten as
$$E[1]=1; \quad E[ y^j \alpha^k \beta^l \cdots] = y^j
E[\alpha^k]E[\beta^l] \cdots $$ for any set of distinct umbrae in
$A,$ and nonnegative integers $j,k,l,\ldots.$ In $K[y][A]$ an
umbra is said to be a {\it scalar umbra} when its moments are
elements of $K,$ while it is said to be a {\it polynomial umbra}
if its moments are polynomials of $K[y].$ A sequence of
polynomials ${\rm p}_0, {\rm p}_1, \ldots \in K[y]$ is umbrally
represented by a polynomial umbra if and only if ${\rm p}_0 = 1$
and ${\rm p}_n$ is of degree $n$ for every nonnegative integer
$n.$ If we replace the integer $n$ by the indeterminate $y$ in
(\ref{(momdot)}), then
\begin{equation}
E[(y.\alpha)^i]=  \sum_{\lambda \vdash i}(y)_{\nu_{\lambda}}
d_{\lambda} a_{\lambda}, \label{(momdoty)}
\end{equation}
where $(y)_{\nu_{\lambda}}$ denotes the lower factorial
polynomials in $K[y]$.
\par
Some other auxiliary umbrae will be used in the following. The
symbol $\alpha^{.n}$ is an auxiliary umbra denoting the product
$\alpha_1 \, \alpha_2 \, \cdots \, \alpha_n,$ where $\{\alpha_1,
\alpha_2, \ldots, \alpha_n\}$ are similar but uncorrelated umbrae.
Moments of $\alpha^{.n}$ can be easily recovered from its
definition. Indeed, if the umbra $\alpha$ represents the sequence
$1, a_1, a_2, \ldots,$ then
$$E[(\alpha^{.n})^k]=a_k^n,$$
for nonnegative integers $k$ and $n.$  The umbra $\gamma$ is said
to be {\it multiplicative inverse} of the umbra $\alpha$ if and
only if $\alpha \gamma \equiv u.$ Recall that, in dealing with a
saturated umbral calculus, the multiplicative inverse of an umbra
is not unique, but any two multiplicative inverses of the same
umbra are similar. From the definition, it follows:
$$a_n g_n = 1 \quad \forall n=0,1,2,\ldots \quad \hbox{that is } \quad g_n = \frac{1}{a_n},$$
where $a_n$ and $g_n$ are moments of $\alpha$ and $\gamma$
respectively. The multiplicative inverse of an umbra $\alpha$
should be denoted by $\alpha^{.(-1)},$ but in order to simplify
the notation and in agreement with our intuition, in the following
we will use the symbol $1/\alpha.$
\section{$k$-statistics via compound Poisson r.v.'s}
In this section we resume previous results of the authors
\citep{Dinardo4}, useful to simplify the subsequent reading.
\par
 The $i$-th $k$-statistic
$k_i$ is the unique symmetric unbiased estimator of the cumulant
$\kappa_i$ of a given statistical distribution \citep{Stuart},
that is $E[k_i] = \kappa_i.$ By virtue of (\ref{(conn)}), in
umbral terms we shall write
$$k_i \simeq (\chi.\alpha)^i.$$
In \citet{Dinardo4}, $k$-statistics have been related to cumulants
of compound Poisson r.v.'s by the following theorem.
\begin{theorem} \label{th1}
If $c_i(y)=E[(n.\chi.y.\beta.\alpha)^i], i=1,2,\ldots$ then
\begin{equation}
(\chi.\alpha)^i \simeq c_i \left( \frac{\chi.\chi}{n.\chi}
\right). \label{(3)}
\end{equation}
\end{theorem}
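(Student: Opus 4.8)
The plan is to evaluate the polynomial $c_i(y)$ in closed form and then to show that substituting the umbra $\chi.\chi/(n.\chi)$ for the indeterminate $y$ reproduces, term by term, the expansion (\ref{(kstat)}) of $(\chi.\alpha)^i$. First I would unwind the composition chain $n.\chi.y.\beta.\alpha$, grouping it as $n.\bigl(\chi.(y.\beta.\alpha)\bigr)$, which is legitimate since the dot product is associative up to similarity. Reading from the right, $y.\beta.\alpha$ is the $\alpha$-compound Poisson umbra of parameter $y$, and $\tau := \chi.y.\beta.\alpha$ is its cumulant umbra. Computing the factorial moments of $\chi.y.\beta$ via (\ref{(ombdot1)}) (they equal $y$ for length one and vanish otherwise), one obtains $E[\tau^i] = y\,a_i$ for every $i \geq 1$, which reflects the fact that the cumulants of a compound Poisson of parameter $y$ are $y$ times the moments $a_i$ of the summand. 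Applying the dot-sum formula (\ref{(momdot)}) to $n.\tau$ and using $\prod_j (y a_j)^{r_j} = y^{\nu_\lambda} a_\lambda$ for $\lambda = (1^{r_1},2^{r_2},\ldots)$, I expect to arrive at
\[
c_i(y) = \sum_{\lambda \vdash i} (n)_{\nu_{\lambda}}\, y^{\nu_{\lambda}}\, d_{\lambda}\, a_{\lambda}.
\]

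Next I would compute two moment sequences. From (\ref{(kstat)}) with $\alpha = \chi$ together with (\ref{(chifact)}), or equivalently from $\chi.\chi \equiv u^{<-1>}$ of Example \ref{Ex1}, the moments of $\chi.\chi$ are $E[(\chi.\chi)^k] = (-1)^{k-1}(k-1)! = x_{(k)}$. On the other hand, applying (\ref{(momdot)}) to $n.\chi$ and noting that only the partition $(1^k)$ survives (all higher moments of $\chi$ vanish) yields $E[(n.\chi)^k] = (n)_k$. Since $\chi.\chi$ and $n.\chi$ can be realized on disjoint supports, they are uncorrelated, and the multiplicative inverse $1/(n.\chi)$ has $k$-th moment $1/(n)_k$; hence
\[
E\left[\left(\frac{\chi.\chi}{n.\chi}\right)^{\!k}\right] = \frac{E[(\chi.\chi)^k]}{E[(n.\chi)^k]} = \frac{x_{(k)}}{(n)_k}.
\]

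Finally I would substitute $y \mapsto \chi.\chi/(n.\chi)$ into the closed form of $c_i$ and apply $E$ term by term. Because the umbra $\chi.\chi/(n.\chi)$ is uncorrelated from the numerical coefficients, each monomial $y^{\nu_{\lambda}}$ contributes the factor $x_{(\nu_{\lambda})}/(n)_{\nu_{\lambda}}$, and the decisive cancellation $(n)_{\nu_{\lambda}} \cdot x_{(\nu_{\lambda})}/(n)_{\nu_{\lambda}} = x_{(\nu_{\lambda})}$ leaves $\sum_{\lambda \vdash i} x_{(\nu_{\lambda})} d_{\lambda} a_{\lambda}$, which by (\ref{(kstat)}) is precisely $E[(\chi.\alpha)^i]$. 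Equality of the two evaluations then gives (\ref{(3)}). The main obstacle I anticipate is the first paragraph: pinning down the closed form of $c_i(y)$ requires tracking the composition chain carefully so that the partitions collapse and only the factor $(n)_{\nu_{\lambda}}\,y^{\nu_{\lambda}}$ survives for each $\lambda$, and one must justify that replacing the indeterminate $y$ by an umbra factorizes correctly through uncorrelation and the reciprocal moments of $n.\chi$. It is exactly the appearance of the same $n$ in both $(n)_{\nu_{\lambda}}$ and $n.\chi$ that powers the cancellation, so making that coincidence rigorous is the heart of the argument.
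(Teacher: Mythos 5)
Your proposal is correct and takes essentially the paper's own route: the paper defers the univariate proof to Di Nardo et al. (2008b), but its proof of the multivariate generalization (Theorem \ref{multt} via Proposition \ref{thm7}) is precisely your argument --- expand the moments of $n.(\chi.y.\beta.\alpha)$ over partitions to get $c_i(y)=\sum_{\lambda \vdash i}(n)_{\nu_{\lambda}}y^{\nu_{\lambda}}d_{\lambda}a_{\lambda}$, then substitute $y \mapsto (\chi.\chi)/(n.\chi)$ so that each $(n)_{\nu_{\lambda}}y^{\nu_{\lambda}}$ collapses to $x_{(\nu_{\lambda})}$ and (\ref{(kstat)}) is recovered. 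The only cosmetic difference is that you evaluate the coefficients to the scalars $(n)_{\nu_{\lambda}}$ and invoke the reciprocal moments $1/(n)_k$ of the multiplicative inverse, whereas the paper keeps the umbral factor $(n.\chi)^{|S_{\pi}|}$ and cancels it directly against $1/(n.\chi)$ by the defining property $\alpha \, (1/\alpha) \equiv u$ --- both justifications are valid, and your uncorrelation remark covers the needed disjointness of supports.
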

The statement of Theorem \ref{th1} requires some more remarks. As
stated in \citet{Dinardo1}, the umbra
$$(\chi.y.\beta).\alpha \equiv \chi.(y.\beta.\alpha)$$
is the cumulant umbra of a polynomial $\alpha$-partition umbra,
the latter corresponding to a compound Poisson r.v. of parameter
$y.$ The polynomial umbra
$$n.(\chi.y.\beta).\alpha \equiv n.\chi.(y.\beta.\alpha),$$
is therefore the sum of $n$ uncorrelated cumulant umbrae of a
polynomial $\alpha$-partition umbra. Thus, Theorem \ref{th1}
states that cumulants of $\alpha$ can be recovered from the
moments of $n.(\chi.y.\beta).\alpha,$ by a suitable replacement of
the indeterminate $y.$
\par
Usually $k$-statistics are expressed in terms of the $r$-th powers
of the data points $S_r = \sum_{i=1}^n X_i^r.$ In order to recover
this expression for $k$-statistics in umbral terms, it is
sufficient to express the polynomials $c_i(y)$ in terms of power
sums $n.\alpha^r \equiv \alpha_1^r + \cdots + \alpha_n^r.$ To this
aim, the starting point is to express the moments of a generic
umbra such as $n.(\gamma \alpha),$ with $\gamma \in A,$ in terms
of $r$-th power sums $n.\alpha^r .$
\begin{theorem} \label{thm2}
If $\alpha, \gamma \in A,$ then
\begin{equation}
[n.(\gamma\alpha)]^i \simeq \sum_{\lambda \vdash i} d_{\lambda}
(\chi.\gamma)_{\lambda} (n.\alpha)^{r_1} (n.\alpha^2)^{r_2} \cdots
\label{(eee)}
\end{equation}
with $\lambda = (1^{r_1},2^{r_2},\ldots).$
\end{theorem}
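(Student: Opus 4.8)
The plan is to establish (\ref{(eee)}) by passing to (umbral) moment generating functions in a formal indeterminate $t$ and comparing coefficients. Write $a_m=E[\alpha^m]$, $g_m=E[\gamma^m]$, and let $K_j=E[(\chi.\gamma)^j]$ be the cumulants of $\gamma$; put $F(s)=\sum_{m\geq 0} g_m s^m/m!$, so that $\log F(s)=\sum_{j\geq 1} K_j s^j/j!$ by the moment--cumulant relation (\ref{(conn)}). Because $n.(\gamma\alpha)$ is a sum of $n$ uncorrelated copies of the product umbra $\gamma\alpha$, whose moments are $E[(\gamma\alpha)^m]=g_m a_m$ by the uncorrelation property, the left-hand side has generating function
\[
E\big[e^{t\,n.(\gamma\alpha)}\big]=\Big(E\big[e^{t\,\gamma\alpha}\big]\Big)^{n}=\Big(\sum_{m\geq 0} g_m a_m\,\frac{t^m}{m!}\Big)^{n}.
\]
It then suffices to show that the right-hand side of (\ref{(eee)}) generates the same series.

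Next I would collect the right-hand side into a generating function. Using $d_{\lambda}=i!/\prod_j r_j!(j!)^{r_j}$ for $\lambda=(1^{r_1},2^{r_2},\dots)\vdash i$ together with $(\chi.\gamma)_{\lambda}\simeq\prod_j K_j^{r_j}$, the factor $d_{\lambda}\,t^i/i!$ collapses to $\prod_j (r_j!)^{-1}(t^j/j!)^{r_j}$, so summing over all partitions factors as an exponential (the exponential formula):
\[
\sum_{i\geq 0}\frac{t^i}{i!}\sum_{\lambda\vdash i} d_{\lambda}\,(\chi.\gamma)_{\lambda}\,(n.\alpha)^{r_1}(n.\alpha^2)^{r_2}\cdots \;\simeq\; \exp\!\Big(\sum_{j\geq 1} K_j\,\frac{t^j}{j!}\,(n.\alpha^j)\Big).
\]
The decisive step is to substitute the power-sum umbrae $n.\alpha^j\equiv\alpha_1^j+\cdots+\alpha_n^j$, where $\alpha_1,\dots,\alpha_n$ are the uncorrelated similar copies composing the dot product, and to factor the exponential over the sample in the formal power-series ring $R[A][[t]]$:
\[
\exp\!\Big(\sum_{k=1}^{n}\sum_{j\geq 1} K_j\,\frac{(t\alpha_k)^j}{j!}\Big)=\prod_{k=1}^{n}\exp\big(\log F(t\alpha_k)\big)=\prod_{k=1}^{n} F(t\alpha_k).
\]
Applying $E$ and using that the $n$ copies are pairwise uncorrelated gives $\prod_{k} E[F(t\alpha_k)]=\big(\sum_m g_m a_m t^m/m!\big)^{n}$, matching the left-hand side; equating the coefficient of $t^i/i!$ yields (\ref{(eee)}).

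The main obstacle is the careful justification of these formal manipulations rather than any deep idea: one must verify the exponential re-summation of the partition sum, the identity $\exp\log F=F$ at the level of power series with umbral-polynomial coefficients, and---most delicately---the recombination of the correlated product $\prod_j (n.\alpha^j)^{r_j}$ (whose factors all share the single sample $\alpha_1,\dots,\alpha_n$) into $\prod_k F(t\alpha_k)$, which bundles all powers of each $\alpha_k$ before uncorrelation across $k$ is invoked. One must also record that the scalar cumulant coefficients $(\chi.\gamma)_{\lambda}\simeq\prod_j K_j^{r_j}$ may be pulled through $E$ because $\chi.\gamma$ is supported on $\gamma$-umbrae and is therefore uncorrelated with the $\alpha$-power sums. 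A purely moment-theoretic route---expanding $E[\prod_j(n.\alpha^j)^{r_j}]$ in the augmented-monomial basis $(n)_{\nu_{\mu}}a_{\mu}$ and checking that the cumulants of $\gamma$ rebuild the moments $g_{\mu}$ via (\ref{(conn)})---also works but demands heavier symmetric-function bookkeeping, which the generating-function argument avoids.
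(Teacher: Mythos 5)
Your argument is correct, but it is not the route this paper takes: the paper in fact contains no proof of Theorem \ref{thm2} at all, deferring it (together with Theorem \ref{thm3}) to \citep{Dinardo4}, and the proof given there is umbral--combinatorial rather than analytic. It rests on the partition expansion (\ref{(momdot1)}), which gives $[n.(\gamma\alpha)]^i \simeq \sum_{\lambda \vdash i}(n)_{\nu_{\lambda}} d_{\lambda}\, \gamma_{\lambda}\alpha_{\lambda}$, followed by a reorganization of the sum in which the moments $\gamma_{\lambda}$ are rebuilt from the cumulants $(\chi.\gamma)_{\lambda}$ (the inversion of (\ref{(conn)}), umbrally $\gamma \equiv \beta.(\chi.\gamma)$) while the falling factorials $(n)_{\nu_{\lambda}}$ are reabsorbed into the power sums $n.\alpha^j$ via (\ref{(momdot)}); the bookkeeping is a refinement argument on partitions. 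Your exponential-formula proof is the generating-function shadow of exactly that combinatorics: grouping $\sum_{j} K_j t^j (n.\alpha^j)/j!$ by the sample index $k$ and invoking $\exp\log F = F$ is precisely the statement that within each copy $\alpha_k$ the cumulants of $\gamma$ recompose into the moments $g_m$, while uncorrelation across the $k$'s replaces the falling-factorial bookkeeping. Your route buys a short, self-contained verification, and you correctly isolate the one genuine trap: the factors of $(n.\alpha)^{r_1}(n.\alpha^2)^{r_2}\cdots$ share a single sample, so the product must be expanded as the polynomial $\left(\sum_k \alpha_k\right)^{r_1}\left(\sum_k \alpha_k^2\right)^{r_2}\cdots$ in $R[A]$ \emph{before} $E$ is applied and before uncorrelation across distinct copies is used; likewise your observation that $(\chi.\gamma)_{\lambda}$ evaluates independently to $K_1^{r_1}K_2^{r_2}\cdots$ because its support is disjoint from the $\alpha_k$'s is exactly the needed justification. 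What the paper's (cited) route buys is uniformity with the rest of the machinery: staying inside the umbral syntax lets the same argument transfer verbatim, with partitions replaced by set partitions and subdivisions of multisets, to the multivariate statements (\ref{(gen4.4b)}) and Proposition \ref{thm7} through Theorem \ref{thm8}, where a joint multivariable generating function would be markedly clumsier. Two small formalities to record if you write yours up: the series should be taken with coefficients over the quotient field of $R$ (characteristic zero), so the divided powers $t^m/m!$ and the $\exp$/$\log$ inverse pair are legitimate; and your proof applies unchanged when $\gamma$ is the auxiliary polynomial umbra $\chi.y.\beta$, which is how the theorem is actually deployed in Section 3.
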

Equivalence  (\ref{(eee)}) is the way for expressing the
polynomials $c_i(y),$ umbrally equivalent to moments of
$n.\chi.y.\beta.\alpha,$ in terms of $r$-th power sums
$n.\alpha^r.$ Indeed, as
$$n.\chi.y.\beta.\alpha \equiv n.[(\chi.y.\beta)\alpha]$$
(see (31) in \citealt{Dinardo1}), we can use equivalence
(\ref{(eee)}), with $\gamma$ replaced by  $(\chi.y.\beta).$ This
is the starting point to prove the following result, by which the
fast algorithm for $k$-statistics can be easily recovered.
\begin{theorem} \label{thm3}
In $K[y],$ let
\begin{equation}
p_n(y)=\sum_{k=1}^n (-1)^{k-1} (k-1)! \, S(n,k) \, y^k,
\label{(pum)}
\end{equation}
where $S(n,k)$ are the Stirling numbers of second type. For every
$\alpha \in A$ we have
\begin{equation}
(\chi.\alpha)^i \simeq \sum_{\lambda \vdash i} d_{\lambda}
p_{\lambda} \left( \frac{\chi.\chi}{n.\chi} \right)
(n.\alpha)^{r_1} (n.\alpha^2)^{r_2} \cdots \label{(7)}
\end{equation}
with $\lambda = (1^{r_1},2^{r_2},\ldots)$ and
$p_{\lambda}(y)=[p_1(y)]^{r_1} [p_2(y)]^{r_2} \cdots.$
\end{theorem}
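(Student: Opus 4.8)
The plan is to chain together Theorem \ref{th1} and Theorem \ref{thm2}, the only real work being to identify the polynomials that appear as coefficients. By Theorem \ref{th1} it suffices to express $c_i(y)=E[(n.\chi.y.\beta.\alpha)^i]$ in terms of the power sums $n.\alpha^r$ and then to carry out the replacement $y\mapsto\chi.\chi/n.\chi$. To handle $c_i(y)$, I would start from the identity $n.\chi.y.\beta.\alpha \equiv n.[(\chi.y.\beta)\alpha]$ recalled just before the statement and apply Theorem \ref{thm2} with $\gamma$ replaced by $\chi.y.\beta$, obtaining the umbral equivalence
\begin{equation*}
(n.\chi.y.\beta.\alpha)^i \simeq \sum_{\lambda \vdash i} d_{\lambda}\,(\chi.\chi.y.\beta)_{\lambda}\,(n.\alpha)^{r_1}(n.\alpha^2)^{r_2}\cdots .
\end{equation*}
Thus the coefficient attached to each partition is the umbral monomial $(\chi.\chi.y.\beta)_{\lambda}$, whose evaluation is the product of moments of $\chi.\chi.y.\beta$ dictated by $\lambda$.

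The heart of the argument, and the step I expect to be the main obstacle, is to show that the moments of $\chi.\chi.y.\beta$ are exactly the polynomials $p_i(y)$ of (\ref{(pum)}). I would argue in two stages, both exploiting the reading of $\chi.\zeta$ as the cumulant umbra of $\zeta$ (Example \ref{Ex1}). First, since $y.\beta$ is the polynomial analogue of the Bell umbra, namely the umbral counterpart of a Poisson random variable of parameter $y$, the moments of $\chi.y.\beta$ are the cumulants of that Poisson law and hence are all equal to $y$ for $n\ge 1$. Second, the moments of $\chi.\chi.y.\beta$ are then the cumulants of the constant moment sequence $y,y,\ldots$; applying the moments-to-cumulants formula (\ref{(conn)}) with $a_{\lambda}=y^{\nu_{\lambda}}$ and grouping the partitions according to their length $\nu_{\lambda}=k$ gives
\begin{equation*}
E[(\chi.\chi.y.\beta)^i]=\sum_{k=1}^{i}(-1)^{k-1}(k-1)!\,y^{k}\sum_{\lambda\vdash i,\ \nu_{\lambda}=k} d_{\lambda}.
\end{equation*}
The inner sum counts the set partitions of $\{1,\ldots,i\}$ into exactly $k$ blocks, so it equals the Stirling number $S(i,k)$ and the right-hand side collapses to $p_i(y)$. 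Consequently $E[(\chi.\chi.y.\beta)_{\lambda}]=p_1(y)^{r_1}p_2(y)^{r_2}\cdots=p_{\lambda}(y)$.

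With this identification in hand, I would evaluate the displayed equivalence, using that the coefficient umbra, built on the singleton alphabet, is uncorrelated with the power-sum umbrae $n.\alpha^r$, to get $c_i(y)=\sum_{\lambda\vdash i} d_{\lambda}\,p_{\lambda}(y)\,E[(n.\alpha)^{r_1}(n.\alpha^2)^{r_2}\cdots]$. Finally I feed this into Theorem \ref{th1}: the substitution $y\mapsto\chi.\chi/n.\chi$ replaces each polynomial $p_{\lambda}(y)$ by the umbral polynomial $p_{\lambda}(\chi.\chi/n.\chi)$, while, by the same uncorrelation (the umbrae entering $\chi.\chi$ and $n.\chi$ are disjoint from those entering the power sums), the power sums can be kept symbolic rather than evaluated. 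Matching expectations term by term then produces (\ref{(7)}). The only genuinely delicate bookkeeping is precisely this last commutation of the substitution with the evaluation, which rests entirely on the disjointness of the singleton alphabet from the alphabet of $\alpha$.
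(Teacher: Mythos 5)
Your proposal is correct and takes essentially the same route as the paper, which obtains (\ref{(7)}) exactly by applying Theorem \ref{thm2} with $\gamma$ replaced by $\chi.y.\beta$ and then invoking Theorem \ref{th1} (the details being deferred to \citet{Dinardo4}). Your key computation, that the moments of $\chi.\chi.y.\beta$ are the polynomials $p_n(y)$ (cumulants of the constant moment sequence $y,y,\ldots$, with $\sum_{\lambda \vdash i,\ \nu_{\lambda}=k} d_{\lambda}=S(i,k)$), is precisely the identification the paper relies on, as it confirms later via $E[(\chi.\chi.y.\beta)^{.S_{\pi}}]=p_{\pi}(y)$ in the multivariate argument.
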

For the proofs of Theorems \ref{thm2} and \ref{thm3} see
\citep{Dinardo4}.
\subsection{Polykays via compound Poisson r.v.'s}
The symmetric statistic  $k_{r, \ldots,\, t}$ satisfying
$$E[k_{r, \ldots,\, t}]=\kappa_r \cdots \kappa_t,$$
where $\kappa_r, \ldots, \kappa_t$ are cumulants, generalizes
$k$-statistics and these were originally called {\it generalized
$k$-statistics} by \citet{Dressel}. Later they were called {\it
polykays} by \citet{Tukey}.
\par
As a product of uncorrelated cumulants, the umbral expression for
a polykay is simply
\begin{equation}
k_{r, \ldots,\, t} \simeq (\chi.\alpha)^r \cdots
(\chi^{\prime}.\alpha^{\prime})^t,  \label{(poll1)}
\end{equation}
with $\chi, \ldots, \chi^{\prime}$ being uncorrelated singleton
umbrae, and $\alpha, \ldots, \alpha^{\prime}$ uncorrelated umbrae
satisfying  $\alpha \equiv \cdots \equiv \alpha^{\prime}.$
\par
Also polykays are usually expressed in terms of power sums. In
\citet{Dinardo4}, starting from (\ref{(poll1)}), we have given a
compressed  umbral formula in order to express polykays in terms
of power sums. Such a formula has been implemented in {\tt Maple}
and the resulting computational times have been presented and
discussed in \citet{Dinardo3}. Despite the compressed expression
for this umbral formula, the computational cost of the resulting
algorithm involves the Bell numbers and so increases too rapidly
with $r+\cdots+t.$ A different umbral formula may be constructed
by generalizing the results of the previous section. Such a
formula is not quite expressible in a compressed form, but speeds
up the algorithm for building polykays (see the computational
times in Table 1 in Section 6).
\par
For plainness, in the following we just deal with two subindexes
$k_{r,t},$ the generalization to more than two being
straightforward.
\par
Let us consider a polynomial umbra whose moments are all equal to
$y,$ up to an integer $k,$ after which the moments are all zero.
Let us therefore define the umbra $\delta_{y,k}$ satisfying
$$\delta_{y,k} \simeq \left\{ \begin{array}{cl}
(\chi.y.\beta)^i & i=0,1,2,\ldots,k, \\
0 & i > k.
\end{array} \right.$$
\begin{lemma} Let $r,t$ be two nonnegative integers. If $k=\max\{r,t\},$ then
\begin{equation}
[n.(\delta_{y,k} \, \alpha)]^{r+t} \simeq \sum_{(\lambda \vdash r, \eta \vdash t)} y^{\nu_{\lambda}
+ \nu_{\eta}} (n)_{\nu_{\lambda} + \nu_{\eta}} d_{\lambda + \eta} \alpha_{\lambda + \eta}.
\label{(exx)}
\end{equation}
\end{lemma}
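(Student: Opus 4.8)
The plan is to reduce the statement to the single-umbra power formula (\ref{(momdot1)}) applied to the product umbra $\delta_{y,k}\,\alpha$, and then to reorganize the resulting sum over partitions of $r+t$ into the announced sum over pairs $(\lambda\vdash r,\eta\vdash t)$.

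First I would pin down the moments of $\delta_{y,k}$. Since $y.\beta$ is the umbral counterpart of a Poisson random variable of parameter $y$, whose cumulants are all equal to $y$, and since the moments of the cumulant umbra $\chi.(y.\beta)=\chi.y.\beta$ are by definition the cumulants of $y.\beta$ (Example \ref{Ex1}), one has $E[(\chi.y.\beta)^i]=y$ for every $i\ge 1$. Hence, by the defining property of $\delta_{y,k}$, its moments are $1$ in degree $0$, equal to $y$ for $1\le i\le k$, and equal to $0$ for $i>k$. As $\delta_{y,k}$ and $\alpha$ are uncorrelated, the product umbra $\delta_{y,k}\,\alpha$ then has moments $y\,a_i$ for $1\le i\le k$ and $0$ for $i>k$.

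Next I would apply (\ref{(momdot1)}) with $\alpha$ replaced by $\delta_{y,k}\,\alpha$ and $i=r+t$, obtaining
\[
[n.(\delta_{y,k}\,\alpha)]^{r+t}\simeq\sum_{\pi\vdash r+t}(n)_{\nu_{\pi}}\,d_{\pi}\,(\delta_{y,k}\,\alpha)_{\pi}.
\]
Writing $\pi=(1^{m_1},2^{m_2},\ldots)$, the truncation built into $\delta_{y,k}$ forces $(\delta_{y,k}\,\alpha)_{\pi}\simeq 0$ as soon as $\pi$ has a part exceeding $k$, while on the partitions whose parts are all at most $k$ one gets $(\delta_{y,k}\,\alpha)_{\pi}\simeq y^{\nu_{\pi}}\alpha_{\pi}$, since $\sum_j m_j=\nu_{\pi}$. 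The choice $k=\max\{r,t\}$ is what controls exactly which partitions survive.

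The decisive step is the combinatorial reindexing, using the additive notation $\lambda+\eta$ recalled after (\ref{(momdot)}): I would pass from the surviving partitions $\pi\vdash r+t$ to pairs by writing $\pi=\lambda+\eta$ with $\lambda\vdash r$ and $\eta\vdash t$, exploiting $\nu_{\lambda+\eta}=\nu_{\lambda}+\nu_{\eta}$, $\alpha_{\lambda+\eta}\simeq\alpha_{\lambda}\alpha_{\eta}$, and $y^{\nu_{\pi}}=y^{\nu_{\lambda}+\nu_{\eta}}$, which turns the sum over $\pi$ into the sum over pairs $(\lambda\vdash r,\eta\vdash t)$ displayed in (\ref{(exx)}). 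The main obstacle I expect lies precisely in the bookkeeping of this passage: one must verify that summing over all pairs $(\lambda,\eta)$ with $\lambda+\eta=\pi$ reproduces each admissible $\pi$ with exactly the weight $(n)_{\nu_{\pi}}d_{\pi}y^{\nu_{\pi}}$ supplied by (\ref{(momdot1)}), and that the truncation level $k=\max\{r,t\}$ neither discards an admissible pair nor admits a partition that cannot be split as $\lambda+\eta$. Pinning down this correspondence — the matching of the multiplicities coming from the two indexings — is the heart of the argument; once it is in place, equivalence (\ref{(exx)}) follows by term-by-term comparison.
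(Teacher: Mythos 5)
Your first two steps are exactly those of the paper's proof: the paper likewise applies (\ref{(momdot)}) to the product umbra $\delta_{y,k}\,\alpha$, obtaining $[n.(\delta_{y,k}\,\alpha)]^{r+t} \simeq \sum_{\xi \vdash (r+t)} (n)_{\nu_{\xi}}\, d_{\xi}\, (\delta_{y,k})_{\xi}\, \alpha_{\xi}$ from the uncorrelation of $\delta_{y,k}$ and $\alpha$, and your identification of the moments of $\delta_{y,k}$ ($y$ for $1\le i\le k$, zero beyond, hence $(\delta_{y,k})_{\xi}\simeq y^{\nu_{\xi}}$ when every part of $\xi$ is at most $k$, and $\simeq 0$ otherwise) is precisely the mechanism being invoked. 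The genuine gap is the step you defer: the term-by-term matching between the surviving partitions $\xi\vdash r+t$ and the pairs $(\lambda\vdash r,\eta\vdash t)$, which you rightly call the heart of the argument but do not carry out --- and which in fact fails as posed. There are two concrete obstructions. First, the map $(\lambda,\eta)\mapsto\lambda+\eta$ is not injective, so the double sum in (\ref{(exx)}), read over ordered pairs, overcounts: for $r=t=2$ the partition $\xi=(2,1,1)$ arises both from $((2),(1,1))$ and from $((1,1),(2))$, so it appears on the right with coefficient $2\,(n)_3\, d_{(2,1,1)}\, y^3$, whereas the left-hand side produces it exactly once. Second, the truncation at $k=\max\{r,t\}$ does not cut the support down to decomposable partitions: for $(r,t)=(3,1)$ the partition $\xi=(2,2)$ has all parts $\le k=3$, so $(\delta_{y,k})_{(2,2)}\simeq y^2\neq 0$ and $\xi$ survives on the left, yet it admits no splitting with $\lambda\vdash 3$ and $\eta\vdash 1$, since $\eta=(1)$ would force a part equal to $1$; similarly $\xi=(2,2,2)$ survives for $r=t=3$ but cannot be split. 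So the verification you postpone is not mere bookkeeping: under the natural reading of (\ref{(exx)}) it cannot succeed.

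You should also know that the paper's own proof is no more complete than your sketch at precisely this point: it dismisses the unwanted terms with the remark that $(\delta_{y,k})_{\xi}\simeq 0$ whenever $\lambda+\eta<\xi$ in the lexicographic order, and that observation covers only the partitions containing a part larger than $k$ (those lexicographically above every $\lambda+\eta$); it says nothing about stragglers such as $(2,2)$ above. The bookkeeping you worried about is handled properly only in the multivariate treatment of Section 5, where the filter umbra $\delta_{y,N}$ of (\ref{(defu)}) annihilates every block mixing elements of $T$ and $L$ --- information that a partition of the bare integer $r+t$ cannot record --- and where coincidences among subdivisions are weighted explicitly by the factor $n_{\pi+\tau}$ in (\ref{(exx5)}) and by $n_{\pi}n_{\tau}/n_{\pi+\tau}$ in (\ref{(subs1)}). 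A repaired univariate statement would have to follow the same pattern: either keep labelled copies distinguishing the $r$-side from the $t$-side, or sum over distinct partitions $\xi=\lambda+\eta$ weighted by their number of splittings. As it stands, neither your proposed plain reindexing nor the lemma's literal double sum closes this gap.
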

\begin{proof}
By equivalence (\ref{(momdot)}), we have
$$[n.(\delta_{y,k} \, \alpha)]^{r+t} \simeq \sum_{\xi \vdash (r+t)} (n)_{\nu_{\xi}} d_{\xi}
(\delta_{y,k})_{\xi} \alpha_{\xi},$$ since $\delta_{y,k}$ and
$\alpha$ are uncorrelated. The result follows by observing that
$(\delta_{y,k})_{\xi} \simeq 0$ for any $\xi$ satisfying
$\lambda+\eta < \xi,$ where $\lambda \vdash r, \eta \vdash t$ and
$<$ represents the lexicography order on integer partitions.
\end{proof}
\par
Let $p_{r,t}(y)$ be the polynomial obtained by evaluating the
right hand side of (\ref{(exx)}), that is
$$p_{r,t}(y) = \sum_{(\lambda \vdash r, \eta \vdash t)} y^{\nu_{\lambda}+\nu_{\eta}}
(n)_{\nu_{\lambda} + \nu_{\eta}} d_{\lambda + \eta} a_{\lambda +
\eta}.$$ The proof of the following theorem is straightforward and
allows us to express products of uncorrelated cumulants by using
the polynomials $p_{r,t}(y).$
\begin{theorem} \label{thm4}
If $q_{r,t}$ is the umbral polynomial obtained via $p_{r,t}(y)$ by
replacing $y^{\nu_{\lambda}+\nu_{\eta}}$ by
\begin{equation}
\frac{(\chi.\chi)^{\nu_{\lambda}}(\chi^{\prime}.\chi^{\prime})^{\nu_{\eta}}}{(n.\chi)^{\nu_{\lambda}+\nu_{\eta}}}
\frac{d_{\lambda} d_{\eta}}{d_{\lambda + \eta}},
\label{(subs)}
\end{equation}
then
$$(\chi.\alpha)^r (\chi^{\prime}.\alpha^{\prime})^t \simeq q_{r,t}.$$
\end{theorem}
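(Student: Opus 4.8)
The plan is to prove the equivalence by comparing the two evaluations $E[q_{r,t}]$ and $E[(\chi.\alpha)^r(\chi^{\prime}.\alpha^{\prime})^t]$ and checking that they agree term by term over the partition pairs $(\lambda,\eta)$ with $\lambda \vdash r$, $\eta \vdash t$. Since two umbral polynomials are umbrally equivalent exactly when they have the same evaluation, matching these two sums is all that is required. First I would expand the left-hand side: because $\chi,\chi^{\prime}$ are uncorrelated singleton umbrae and $\alpha\equiv\alpha^{\prime}$ are uncorrelated, equivalence (\ref{(kstat)}) gives $(\chi.\alpha)^r \simeq \sum_{\lambda\vdash r} x_{(\nu_{\lambda})}d_{\lambda}\alpha_{\lambda}$ and likewise for $(\chi^{\prime}.\alpha^{\prime})^t$. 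Multiplying the two expansions and invoking uncorrelation together with $a_{\lambda+\eta}=a_{\lambda}a_{\eta}$ yields
$$E[(\chi.\alpha)^r(\chi^{\prime}.\alpha^{\prime})^t] = \sum_{\lambda\vdash r}\sum_{\eta\vdash t} x_{(\nu_{\lambda})}x_{(\nu_{\eta})}\,d_{\lambda}\,d_{\eta}\,a_{\lambda+\eta}.$$

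Next I would expand $E[q_{r,t}]$ using the explicit form of $p_{r,t}(y)$ supplied by the Lemma. Each $(\lambda,\eta)$-summand of $p_{r,t}(y)$ carries the coefficient $(n)_{\nu_{\lambda}+\nu_{\eta}}d_{\lambda+\eta}a_{\lambda+\eta}$, and the substitution (\ref{(subs)}) replaces $y^{\nu_{\lambda}+\nu_{\eta}}$ by the prescribed umbral expression; after the $d_{\lambda+\eta}$ cancels, the $(\lambda,\eta)$-summand of $E[q_{r,t}]$ becomes
$$E\!\left[\frac{(\chi.\chi)^{\nu_{\lambda}}(\chi^{\prime}.\chi^{\prime})^{\nu_{\eta}}}{(n.\chi)^{\nu_{\lambda}+\nu_{\eta}}}\right](n)_{\nu_{\lambda}+\nu_{\eta}}\,d_{\lambda}\,d_{\eta}\,a_{\lambda+\eta}.$$

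The crux of the argument — and essentially the only point of substance — is the scalar identity
$$E\!\left[\frac{(\chi.\chi)^{\nu_{\lambda}}(\chi^{\prime}.\chi^{\prime})^{\nu_{\eta}}}{(n.\chi)^{\nu_{\lambda}+\nu_{\eta}}}\right](n)_{\nu_{\lambda}+\nu_{\eta}} = x_{(\nu_{\lambda})}x_{(\nu_{\eta})}.$$
To establish it I would observe that $\chi.\chi$, $\chi^{\prime}.\chi^{\prime}$ and $n.\chi$ rest on disjoint supports, so the evaluation factorizes. Each factor is then read off from (\ref{(kstat)}) and (\ref{(momdot)}) applied with base umbra $\chi$: since the only nonvanishing moment of $\chi$ is $E[\chi]=1$, every expansion collapses onto the partition $(1^m)$, giving $E[(\chi.\chi)^m]=x_{(m)}$ and $E[(n.\chi)^m]=(n)_m$. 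The multiplicative-inverse rule $g_m=1/a_m$ then furnishes $E[1/(n.\chi)^{\nu_{\lambda}+\nu_{\eta}}]=1/(n)_{\nu_{\lambda}+\nu_{\eta}}$, whose falling factorial cancels the $(n)_{\nu_{\lambda}+\nu_{\eta}}$ outside and leaves precisely $x_{(\nu_{\lambda})}x_{(\nu_{\eta})}$.

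Inserting this identity into the $(\lambda,\eta)$-summand of $E[q_{r,t}]$ reproduces the expansion of $E[(\chi.\alpha)^r(\chi^{\prime}.\alpha^{\prime})^t]$ obtained in the first step, summand by summand, which proves $(\chi.\alpha)^r(\chi^{\prime}.\alpha^{\prime})^t \simeq q_{r,t}$. The one delicate point that deserves explicit justification is the factorization of the ratio $(\chi.\chi)^{\nu_{\lambda}}(\chi^{\prime}.\chi^{\prime})^{\nu_{\eta}}/(n.\chi)^{\nu_{\lambda}+\nu_{\eta}}$ under $E$ — that is, that the several copies of $\chi$ underlying these auxiliary umbrae are genuinely uncorrelated — since it is exactly this independence that makes the whole substitution scheme (\ref{(subs)}) legitimate and lets the scalar identity be applied factorwise.
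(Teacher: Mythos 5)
Your proof is correct and is essentially the paper's own argument: the paper declares the proof of this theorem straightforward, and the intended verification is exactly your term-by-term comparison of the $(\lambda,\eta)$-summands of $q_{r,t}$ (after the $d_{\lambda+\eta}$ cancellation and the evaluations $E[(\chi.\chi)^m]=x_{(m)}$, $E[(n.\chi)^m]=(n)_m$, $E[(1/(n.\chi))^m]=1/(n)_m$) with the expansion of $(\chi.\alpha)^r(\chi^{\prime}.\alpha^{\prime})^t$ obtained from (\ref{(kstat)}) and $a_{\lambda+\eta}=a_{\lambda}a_{\eta}$. This is also precisely the scheme the paper makes explicit in the multivariate generalization (Theorem \ref{thm8}), where your ratio $d_{\lambda}d_{\eta}/d_{\lambda+\eta}$ appears as $n_{\pi}n_{\tau}/n_{\pi+\tau}$, and your closing remark about the uncorrelation of the auxiliary umbrae $\chi.\chi$, $\chi^{\prime}.\chi^{\prime}$ and $1/(n.\chi)$ correctly identifies the hypothesis that legitimizes the factorwise evaluation.
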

These two last results are sufficient to express the polykay
$k_{r,t}$ in terms of power sums. The steps are summarized in the
following:
\begin{enumerate}
\item[{\it i)}] we apply Theorem \ref{thm2} to the polynomial
umbra $n.(\delta_{y,k} \, \alpha),$ with $k =\max\{r,t\},$ in
order to link the polynomials $p_{r,t}(y)$ to power sums, that is
\begin{equation}
[n.(\delta_{y,k} \, \alpha)]^{r+t} \simeq  p_{r,t}(y) \simeq \sum_{\xi \vdash (r+t)} d_{\xi}
(\chi.\delta_{y,k})_{\xi} (n.\alpha)^{s_1} (n.\alpha^2)^{s_2} \cdots;
\label{(ee1)}
\end{equation}
\item[{\it ii)}]  we evaluate the cumulants of the umbra
$\delta_{y,k}$ by means of (\ref{(kstat)}), by recalling that
moments corresponding to powers greater than $k$ are zero;
\item[{\it iii)}] we replace occurrences of
$y^{\nu_{\lambda}+\nu_{\eta}}$ in (\ref{(ee1)}) by (\ref{(subs)}),
thanks to Theorem \ref{thm4}.
\end{enumerate}
The steps {\it i)} --  {\it iii)} are the building blocks of the
fast algorithm for generating polykays.
\section{The multivariate case: umbrae indexed by multisets}
In order to consider multivariate $k$-statistics and polykays, we
need of the notion of multivariate moments and multivariate
cumulants of an umbral monomial. The umbral tools necessary to
deal with multivariate moments are introduced in \citet{Dinardo4}.
Here we recall basic notation and equivalences in order to
generalize Theorems \ref{thm2} and \ref{thm4} to the multivariate
case.
\par
A {\it multiset} $M$ is a pair $(\bar{M}, f),$ where $\bar{M}$ is
a set, called the {\it support} of the multiset, and $f$ is a
function from $\bar{M}$ to nonnegative integers. For each $\mu \in
\bar{M},$ $f(\mu)$ is the {\it multiplicity} of $\mu.$ The {\it
length} of the multiset $(\bar{M},f),$ usually denoted by $|M|,$
is the sum of multiplicities of all elements of $\bar{M},$ that is
$$|M| = \sum_{\mu \in \bar{M}} f(\mu).$$ When the support of $M$
is a finite set, say $\bar{M}=\{\mu_1, \mu_2, \ldots, \mu_k\},$ we
will write
$$M = \{\mu_1^{(f(\mu_1))},\mu_2^{(f(\mu_2))}, \ldots, \mu_k^{(f
(\mu_k))}\} \quad \hbox{or} \quad M = \{\underbrace{\mu_1, \ldots,
\mu_1}_{f(\mu_1)}, \ldots, \underbrace{\mu_k, \ldots,
\mu_k}_{f(\mu_k)}\}.$$ For example the multiset
$$M =
\{\underbrace{\alpha, \ldots, \alpha}_{i}\} = \{\alpha^{(i)}\}$$
has length $i,$ support $\bar{M} = \left\{ \alpha \right\}$ and
$f(\alpha)=i.$ Recall that a multiset $M_i=(\bar{M_i},f_i)$ is a
{\it submultiset} of $M=(\bar {M}, f)$ if $\bar{M_i} \subseteq
\bar{M}$ and $f_i(\mu) \leq f(\mu), \, \forall \mu \in \bar{M_i}.$
\\
\begin{example}
{\rm If $M=\{ \alpha, \alpha, \gamma, \delta, \delta\}$ then
$M_1=\{\alpha,\alpha\}$ is a submultiset with support $\bar{M_1} = \{\alpha\}$ and
$f_1(\alpha)=2.$ Also $M_2=\{\alpha, \delta, \delta\}$ is a  submultiset with
support $\bar{M_2} = \{\alpha, \delta\}$ and $f_2(\alpha)=1, f_2(\delta)=2.$}
\end{example}
\smallskip \par In the following we set
\begin{equation}
\, \, \mu_{M} =  \prod_{\mu \in \bar{M}} \mu^{f(\mu)}, \quad
(n.\mu)_{M} =  \prod_{\mu \in \bar{M}} (n.\mu)^{f(\mu)}, \quad
[n.(\chi \, \mu)]_{M} =  \prod_{\mu \in \bar{M}} [n.(\chi
\mu)]^{f(\mu)}.  \label{(mmomb4)}
\end{equation}
For instance, if $M = \{\alpha^{(i)}\}$ then $\alpha_{M}=
\alpha^i,$ $(n.\alpha)_M = (n.\alpha)^i,$ $[n.(\chi \alpha)]_M =
[n.(\chi \alpha)]^i.$

A {\it subdivision} of a multiset $M$ is a multiset
$S=(\bar{S},g)$ of $k \leq |M|$ non empty submultisets
$M_i=(\bar{M}_i, f_i)$ of $M$ satisfying
\begin{enumerate}
\item[{\it i)}] $\cup_{i=1}^k \bar{M}_i = \bar{M};$ \item[{\it
ii)}] $\sum_{i=1}^k f_i(\mu) = f(\mu)$ for any $\mu \in \bar{M}.$
\end{enumerate}
\medskip \par
\begin{example}
{\rm  Multisets $S_1=\{\{\alpha,\gamma\},\{\alpha\}, \{\delta,\delta\}\}$
and $S_2=\{\{\alpha,\gamma,\delta\},\{\alpha,\delta\}\}$ are subdivisions of
$M=\{ \alpha, \alpha, \gamma, \delta, \delta\}.$}
\end{example}
\smallskip \par
By extending the notation (\ref{(mmomb4)}), we set
\begin{equation}
\mu_{S} =  \prod_{M_i \in \bar{S}} \mu_{M_i}^{g(M_i)}, \quad
(n.\mu)_{S} =  \prod_{M_i \in \bar{S}} (n.\mu_{M_i})^{g(M_i)},
\label{(momb6bis)}
\end{equation}
\begin{equation}
[n.(\chi \mu)]_{S} =  \prod_{M_i \in \bar{S}} [n.(\chi \mu_{M_i})]^{g(M_i)} .
\label{(momb6)}
\end{equation}
\begin{example} \label{ex1}
{\rm If $M=\{ \mu_1, \mu_1, \mu_2\}$, then
$S_1=\{\{\mu_1\},\{\mu_1,\mu_2\}\}$ is a subdivision of $M.$ The
support of $S_1$ consists of two multisets, $M_{1}=\{\mu_1\}$ and
$M_{2}=\{\mu_1, \mu_2\},$ each of one with multiplicity $1,$
therefore $[n.(\chi\mu)]_{S_{1}} =  n.(\chi \mu_{M_1})
n.(\chi\mu_{M_2}).$ Since $n.(\chi \mu_{M_1}) =  n.(\chi \mu_1)$
and $n.(\chi\mu_{M_2}) = n.(\chi \mu_1 \mu_2),$ we have
$[n.(\chi\mu)]_{S_{1}}= n.(\chi \mu_1) n.(\chi \mu_1 \mu_2).$}
\end{example}
\medskip \par
We may construct a subdivision of the multiset $M$ by a suitable
set partition. Recall that a partition $\pi$ of a set $C$ is a
collection $\pi=\{B_1, B_2, \ldots, B_k\}$ with $k \leq n$
disjoint and non-empty subsets of $C$ whose union is $C.$ We
denote by $\Pi_n$ the set of all partitions of $C.$ Suppose the
elements of $M$ to be all distinct, build a set partition and then
replace each element in any block by the original one. By this
way, any subdivision corresponds to a set partition $\pi$ and we
will write $S_{\pi}.$ Note that $|S_{\pi}|=|\pi|$ and it could be
$S_{\pi_1} = S_{\pi_2}$ for $\pi_1 \ne \pi_2,$ as the following
example shows.
\medskip \par
\begin{example}{\rm If  $M=\{ \alpha, \alpha, \gamma, \delta, \delta\}$ suppose
to label each element of $M$ in such a way $C=\{ \alpha_1,
\alpha_2, \gamma_1, \delta_1, \delta_2\}.$ The subdivision
$S_1=\{\{\alpha,\gamma\},\{\alpha\}, \{\delta,\delta\}\}$
corresponds to the partition $\pi_1=
\{\{\alpha_1,\gamma_1\},\{\alpha_2\}, \{\delta_1,\delta_2\}\}$ of
$C.$ We have $|S_{1}|=|\pi_1|.$ Note that the subdivision $S_1$
also corresponds to the partition $\pi_2=
\{\{\alpha_2,\gamma_1\},\{\alpha_1\}, \{\delta_1,\delta_2\}\}.$}
\end{example}
\medskip \par
In the following, we denote by $n_{\pi}$ the number of set
partitions in $\Pi_{|M|}$ corresponding to the same subdivision
$S$ of the multiset $M.$
\par
If $M=\{\alpha^{(i)}\},$ then subdivisions are of type
\begin{equation}
S=\{\underbrace{\{\alpha\},\ldots,\{\alpha\}}_{r_1},
 \underbrace{\{\alpha^{(2)}\},\ldots,\{\alpha^{(2)}\}}_{r_2},
 \ldots\},
\label{submult}
\end{equation}
with $r_1 + 2 \,r_2 + \cdots = i.$ The support of $S$ is
$\bar{S}=\{\{\alpha\}, \{\alpha^{(2)}\}, \ldots\},$ so that
(\ref{(momb6bis)}) and (\ref{(momb6)})  give
\begin{equation}
(n.\alpha)_S \equiv (n.\alpha)^{r_1} (n.\alpha^2)^{r_2} \cdots \quad [n.(\chi\alpha)]_S \equiv
[n.(\chi \alpha)]^{r_1} [n.(\chi \alpha^2)]^{r_2} \cdots.
\label{(mmob7)}
\end{equation}
Before ending this summary, we recall one more notation. Suppose
$S$ is a subdivision of the multiset $M$ of type
$S=\{M_1^{(g(M_1))},M_2^{(g(M_2))}, \ldots, M_j^{(g(M_j))}\}.$ By
the symbol $\mu^{.S}$ we denote
\begin{equation}
\mu^{.S} \equiv  (\mu_{M_1})^{.g(M_1)} \cdots
(\mu^{\prime}_{M_j})^{.g(M_j)}, \label{(1.2)}
\end{equation}
where $\mu_{M_t}$ are uncorrelated umbral monomials. Observe that
also $\mu^{.S}$ is a multiplicative function, that is  if $S_1$
and $S_2$ are subdivisions of $M$ then
$$\mu^{.(S_1 + S_2)} \equiv \mu^{.S_1} \mu^{.S_2},$$
where $S_1 + S_2$ denotes the disjoint union of $S_1$ and $S_2.$
If $M=\{\alpha^{(i)}\},$ then
\begin{equation}
\alpha_{\lambda} \equiv \alpha^{.S}
\label{(eqq)}
\end{equation}
with $\lambda=(1^{r_1}, 2^{r_2}, \ldots).$ The notation
(\ref{(1.2)}) cames in handy in order to evaluate umbral
polynomials like $(n.\mu)_M$ in terms of moments of the umbral
monomials running in $M.$ Indeed, observe that a different way to
write equivalence (\ref{(momdot1)}) follows by using subdivisions
$S_{\pi}$ of $M=\{\alpha^{(i)}\},$ that is
\begin{equation}
(n.\alpha)^i \simeq \sum_{\pi \in \Pi_i} (n.\chi)^{|S_{\pi}|} \alpha^{.S_{\pi}}.
\label{(2m)}
\end{equation}
Replace the multiset $M=\{ \alpha^{(i)}\}$ by a generic multiset
$M,$ then it follows
\begin{equation}
(n.\mu)_M \simeq \sum_{\pi \in \Pi_i} (n.\chi)^{|S_{\pi}|} \mu^{.S_{\pi}}.
\label{(3m)}
\end{equation}
We end the section by adding one more remark. Let $N$ be a
submultiset of $M.$ By the symbol $(\mu_M)_N$ we denote the
monomial umbra $\mu_{M \cap N}.$ This notation allows us to
generalize equivalence (\ref{(3m)}) to umbral polynomial
$(n.\mu_M)_N$ with $N \subset M,$ that is
\begin{equation}
(n.\mu_M)_N \simeq \sum_{\pi \in \Pi_{|N|}} (n.\chi)^{|S_{\pi}|} (\mu_M)^{.S_{\pi}}.
\label{(4mm)}
\end{equation}
\section{Multivariate $k$-statistics via compound Poisson r.v.'s}
In \citet{Dinardo4}, multivariate moments and multivariate
cumulants of an umbral monomial are introduced. Let $M =
\{\mu_1^{(f(\mu_1))}, \mu_2^{(f(\mu_2))}, \ldots,
\mu_r^{(f(\mu_r))}\}$ be a multiset of length $i.$ A multivariate
moment is the element of $K[y]$ corresponding to the umbral
monomial $\mu_{M}$ via evaluation $E,$ that is
$$
E[\mu_M] = m_{t_1 \ldots \, t_r},
$$
where $t_j=f(\mu_j)$ for $j=1,2,\cdots,r.$ The corresponding
multivariate cumulant is the element of $K[y]$ satisfying
\begin{equation}
E[(\chi.\mu)_M]= \kappa_{t_1 \ldots t_r}. \label{momc}
\end{equation}
For example, if $M = \{\alpha^{(i)}\}$ then $(\chi.\mu)_M \simeq
(\chi.\alpha)^i.$ As for $k$-statistics, the umbra
$(\chi.y.\beta.\mu)_M$ is the cornerstone for building efficiently
multivariate $k$-statistics. Let us observe that the evaluation of
$(\chi.y.\beta.\mu)_M$ gives the umbral counterpart of joint
cumulants of a multivariate compound Poisson r.v. with parameter
$y.$ We need therefore to characterize the evaluation of
$[n.(\chi.y.\beta.\mu)]_M.$
\begin{proposition}  \label{thm7}
Let $M$ be  a multiset of length $i.$ The umbra
$[n.(\chi.y.\beta.\mu)]_M$ is umbrally equivalent to the umbral
polynomial
\begin{equation}
c_M(y) = \sum_{\pi \in \Pi_i} (n.\chi)^{|S_{\pi}|} y^{|S_\pi|} \mu^{.S_\pi},
\label{(1m)}
\end{equation}
where $S_\pi$ are the subdivisions of the multiset $M$ corresponding to
the partitions $\pi.$
\end{proposition}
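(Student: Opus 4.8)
The plan is to reduce the statement to the generic subdivision expansion of a dot product already available in the multivariate toolkit, and then to exploit the defining feature of the composition umbra $\chi.y.\beta.\mu$: as noted just before the statement, its evaluation produces the joint cumulants of a multivariate compound Poisson r.v.\ of parameter $y$, and for such a r.v.\ every joint cumulant equals $y$ times the corresponding joint moment.

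First I would read $[n.(\chi.y.\beta.\mu)]_M$ as the instance of equivalence (\ref{(3m)}) obtained by replacing, for each umbra in the support $\bar{M}$, the monomial $\mu$ by the composition umbra $\chi.y.\beta.\mu$, with the singleton $\chi$ and the umbra $y.\beta$ shared across the support exactly as in the definition (\ref{(mmomb4)}) of the $M$-product. Since (\ref{(3m)}) was obtained for a generic multiset $M$ and hence holds for an arbitrary family of umbrae populating its support, this substitution is legitimate and, because it is compatible with every $M$-subscript notation, it turns $(n.\mu)_M$ into $[n.(\chi.y.\beta.\mu)]_M$ and $\mu^{.S_\pi}$ into $(\chi.y.\beta.\mu)^{.S_\pi}$, giving
\[
[n.(\chi.y.\beta.\mu)]_M \simeq \sum_{\pi \in \Pi_i} (n.\chi)^{|S_\pi|}\, (\chi.y.\beta.\mu)^{.S_\pi},
\]
where the outer singleton inside $(n.\chi)$ is kept uncorrelated with the one occurring in the composition.

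The core of the argument is then to evaluate $(\chi.y.\beta.\mu)^{.S_\pi}$. Writing $S_\pi = \{M_1^{(g(M_1))}, \ldots, M_j^{(g(M_j))}\}$ and unfolding (\ref{(1.2)}), this umbra is the product of the uncorrelated factors $[(\chi.y.\beta.\mu)_{M_t}]^{.g(M_t)}$. Here $(\chi.y.\beta.\mu)_{M_t}$ is precisely the joint cumulant umbra attached to the submultiset $M_t$, so the compound Poisson property gives $(\chi.y.\beta.\mu)_{M_t} \simeq y\,\mu_{M_t}$, with the coefficient of $y$ being exactly one and of first degree. Since evaluation factorizes over the uncorrelated blocks and each of the $|S_\pi|$ blocks contributes a single power of $y$, I obtain $(\chi.y.\beta.\mu)^{.S_\pi} \simeq y^{|S_\pi|}\,\mu^{.S_\pi}$. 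Substituting this into the displayed expansion and pulling the scalar $y^{|S_\pi|}$ out of the evaluation reproduces $c_M(y)$ term by term, which is the assertion.

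The step I expect to be the main obstacle is the identity $(\chi.y.\beta.\mu)_{M_t} \simeq y\,\mu_{M_t}$: one must verify in purely umbral terms that the joint cumulants of the polynomial $\mu$-partition umbra $y.\beta.\mu$ are $y$ times the joint moments of $\mu$, and in particular that they are of first degree in $y$, so that the total power of $y$ accumulated over the blocks of $S_\pi$ is exactly $|S_\pi|$ and not some other function of the subdivision. Tracking this linear dependence on $y$ block by block, together with checking that the single $\chi$ and $y.\beta$ shared across the whole support do not interfere with the uncorrelation used in (\ref{(3m)}), is where the bookkeeping must be done carefully.
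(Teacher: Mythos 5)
Your proposal is correct and takes essentially the same route as the paper: the paper likewise substitutes $\chi.y.\beta.\mu$ into equivalence~(\ref{(3m)}) and then extracts the factor $y^{|S_{\pi}|}$, only it does so globally rather than block by block, via $\chi.y.\beta.\mu \equiv (\chi.y.\beta)\mu$, the uncorrelation of $(\chi.y.\beta)$ and $\mu$ to get $[(\chi.y.\beta)\mu]^{.S_{\pi}} \equiv (\chi.y.\beta)^{.S_{\pi}}\mu^{.S_{\pi}}$, and the fact that all moments of $\chi.y.\beta$ equal $y$, whence $(\chi.y.\beta)^{.S_{\pi}} \simeq y^{|S_{\pi}|}$. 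The identity you flag as the main obstacle, $(\chi.y.\beta.\mu)_{M_t} \simeq y\,\mu_{M_t}$, is exactly this factorization restricted to a single block, so your argument closes with no gap.
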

\begin{proof}
In  equivalence (\ref{(3m)}), replace the generic umbral monomial $\mu$ by
$\chi.y.\beta.\mu.$ We have
\begin{equation}
[n.(\chi.y.\beta.\mu)]_M \simeq \sum_{\pi \in \Pi_i} (n.\chi)^{|S_{\pi}|} [(\chi.y.\beta)\mu]^{.S_{\pi}},
\label{(4m)}
\end{equation}
where the form on the right-hand side is worked out by means of
the equivalence $\chi.y.\beta.\mu \equiv (\chi.y.\beta)\mu.$ The
umbrae $(\chi.y.\beta)$ and $\mu$ are uncorrelated, so that
$$[(\chi.y.\beta)\mu]^{.S_{\pi}} \equiv (\chi.y.\beta)^{.S_{\pi}} \mu^{.S_{\pi}}.$$
Let $S_{\pi}=\{M_1^{(g(M_1))},M_2^{(g(M_2))}, \ldots,
M_j^{(g(M_j))}\}.$ Equivalence (\ref{(1m)}) follows from
(\ref{(4m)}), by observing that
$$(\chi.y.\beta)^{.S_{\pi}} \simeq y^{|S_{\pi}|},$$
since the umbra $\chi.y.\beta$ has moments all equal to $y,$ and $\sum g(M_i) = |S_{\pi}|.$
\end{proof}
\begin{theorem} \label{multt}
If $c_M(y)$ are the umbral polynomials given in (\ref{(1m)}), then
\begin{equation}
c_M \left( \frac{\chi.\chi}{n.\chi} \right) \simeq (\chi.\mu)_M.
\end{equation}
\end{theorem}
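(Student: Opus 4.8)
The plan is to reduce Theorem~\ref{multt} to the univariate Theorem~\ref{th1} by recognizing that the substitution $y \mapsto (\chi.\chi)/(n.\chi)$ applied to the polynomial $c_M(y)$ of Proposition~\ref{thm7} mirrors exactly the mechanism by which equivalence~(\ref{(3)}) produces cumulants from the moments $c_i(y)$. First I would record, from Proposition~\ref{thm7}, the explicit form
$$c_M(y) = \sum_{\pi \in \Pi_i} (n.\chi)^{|S_{\pi}|} \, y^{|S_{\pi}|} \, \mu^{.S_{\pi}},$$
and observe that the target equivalence $(\chi.\mu)_M$ should likewise admit an expansion indexed by the set partitions $\pi \in \Pi_i$. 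The natural route is to expand $(\chi.\mu)_M$ directly by applying the cumulant-umbra machinery of Example~\ref{Ex1} together with the multiplicative notation~(\ref{(1.2)}); generalizing equivalence~(\ref{(kstat)}) to the multiset setting should give
$$(\chi.\mu)_M \simeq \sum_{\pi \in \Pi_i} x_{(|S_{\pi}|)} \, \mu^{.S_{\pi}},$$
where $x_{(|S_{\pi}|)}$ are the factorial moments~(\ref{(chifact)}) of the singleton umbra.

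Next I would carry out the substitution termwise. The key computational fact is the identity $\chi.\mu_M \equiv (\chi.\chi).(\beta.\mu_M)$-type relation that underlies Theorem~\ref{th1}; concretely, I would show that replacing $y$ by $(\chi.\chi)/(n.\chi)$ inside each summand converts the factor $(n.\chi)^{|S_{\pi}|} y^{|S_{\pi}|}$ into $(\chi.\chi)^{|S_{\pi}|}$, after which I must verify
$$(\chi.\chi)^{|S_{\pi}|} \, \mu^{.S_{\pi}} \simeq x_{(|S_{\pi}|)} \, \mu^{.S_{\pi}}$$
in evaluation. Because $\chi.\chi \equiv u^{<-1>}$ (established in Example~\ref{Ex1}) and the moments of $u^{<-1>}$ reproduce the factorial moments of $\chi$, the powers $(\chi.\chi)^{|S_{\pi}|}$ evaluate precisely to $x_{(|S_{\pi}|)}=(-1)^{|S_{\pi}|-1}(|S_{\pi}|-1)!$. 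Matching the two partition-indexed expansions summand by summand then yields the claim.

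The main obstacle I anticipate is the bookkeeping connecting the multivariate substitution rule to the univariate one, namely justifying that the substitution of the auxiliary umbra $(\chi.\chi)/(n.\chi)$ for the indeterminate $y$ commutes with the evaluation $E$ in the way required. Unlike an ordinary scalar substitution, here $y$ is replaced by an umbral expression, so I must invoke the substitution formalism of~(\ref{(ombdot1)}) carefully to ensure that the uncorrelation of $\chi.\chi$, $n.\chi$, and the monomials $\mu^{.S_{\pi}}$ is respected, and that the factor $(n.\chi)^{|S_{\pi}|}$ in the denominator cancels cleanly against the $(n.\chi)^{|S_{\pi}|}$ already present in $c_M(y)$. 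Once this cancellation is confirmed and the evaluation of $(\chi.\chi)^{|S_{\pi}|}$ is identified with $x_{(|S_{\pi}|)}$, the result reduces to the multivariate analogue of~(\ref{(kstat)}), which is exactly the expansion of $(\chi.\mu)_M$; the proof should then be short, essentially a term-by-term identification parallel to the univariate Theorem~\ref{th1}.
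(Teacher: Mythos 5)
Your proposal is correct and follows essentially the same route as the paper's proof: substitute $y \mapsto (\chi.\chi)/(n.\chi)$ into $c_M(y)$ from (\ref{(1m)}), cancel the factors $(n.\chi)^{|S_{\pi}|}$, and identify the result with the partition expansion $(\chi.\mu)_M \simeq \sum_{\pi \in \Pi_i} (\chi.\chi)^{|S_{\pi}|}\, \mu^{.S_{\pi}}$, which the paper obtains from (\ref{(3m)}) by replacing $n$ by $\chi$. Your extra step of evaluating $(\chi.\chi)^{|S_{\pi}|}$ to the factorial moments $x_{(|S_{\pi}|)}$ via $\chi.\chi \equiv u^{<-1>}$ is correct but unnecessary, since the termwise identification can be made directly at the level of the umbral powers.
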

\begin{proof}
The result follows directly from (\ref{(1m)}) by replacing $y$ by
$\frac{\chi.\chi}{n.\chi}$ and by recalling that
\begin{equation}
(\chi.\mu)_{M} \simeq  \sum_{\pi \in \Pi_i}
(\chi.\chi)^{|S_{\pi}|} \, \mu^{.{S_\pi}}, \label{(dotprod1)}
\end{equation}
an equivalence which follows from (\ref{(3m)}) by replacing $n$ by
$\chi.$
\end{proof}
\begin{theorem}
Let  $p_{{\pi}}(x)=[p_1(x)]^{r_1} [p_2(x)]^{r_2} \cdots,$ with
$p_n(x)$ given in (\ref{(pum)}) and $\pi$ a partition of
$\Pi_{|M|}$ with $r_1$ blocks of cardinality $1,$ $r_2$ blocks of
cardinality $2,$ and so on, then
\begin{equation}
(\chi.\mu)_M \simeq \sum_{\pi \in \Pi_i} p_{\pi} \left( \frac{\chi.\chi}{n.\chi} \right) (n.\mu)_{S_{\pi}}.
\label{(multv)}
\end{equation}
\end{theorem}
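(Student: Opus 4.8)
The plan is to prove (\ref{(multv)}) as the exact multivariate analogue of Theorem \ref{thm3}, driven by a multiset version of Theorem \ref{thm2}. The key lemma I would establish first is that, for an umbra $\gamma$ and a multiset $M$ of length $i$,
\[
[n.(\gamma\mu)]_M \simeq \sum_{\pi\in\Pi_i}(\chi.\gamma)_\pi\,(n.\mu)_{S_\pi},
\]
where $(\chi.\gamma)_\pi$ denotes a product of uncorrelated copies of the powers $(\chi.\gamma)^{|B|}$, one for each block $B$ of $\pi$, so that $E[(\chi.\gamma)_\pi]$ is the product of the block-size cumulants of $\gamma$. This is the literal transcription of Theorem \ref{thm2} under the dictionary ``integer partition $\lambda=(1^{r_1},2^{r_2},\dots)\ \leftrightarrow\ $ subdivision $S_\pi$'': by (\ref{(mmob7)}) the factor $(n.\mu)_{S_\pi}$ reduces to $(n.\alpha)^{r_1}(n.\alpha^2)^{r_2}\cdots$ when $M=\{\alpha^{(i)}\}$, and collecting the $d_\lambda$ set partitions of each type $\lambda$ returns the coefficient $d_\lambda(\chi.\gamma)_\lambda$ of Theorem \ref{thm2}. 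I would prove it by the same argument as in \citep{Dinardo4}, now with the subdivision calculus of Section 4 --- chiefly (\ref{(3m)}) and (\ref{(4mm)}) --- in place of integer partitions; the cumulant umbra $\chi.\gamma$ enters because passing from the moment-type expansion (\ref{(3m)}) of $[n.(\gamma\mu)]_M$ back to the power-sum basis $(n.\mu)_{S_\pi}$ is a M\"obius inversion on the partition lattice $\Pi_i$, of which the singleton umbra $\chi$ is the umbral carrier, its factorial moments (\ref{(chifact)}) being exactly the M\"obius values of $\Pi_i$.

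With the lemma in hand I would specialise it to $\gamma=\chi.y.\beta$. As observed in the proof of Proposition \ref{thm7}, this polynomial umbra has all moments equal to $y$, so (\ref{(conn)}) gives its cumulants as $\sum_{\lambda\vdash n}(-1)^{\nu_\lambda-1}(\nu_\lambda-1)!\,d_\lambda\,y^{\nu_\lambda}=\sum_{k=1}^n(-1)^{k-1}(k-1)!\,S(n,k)\,y^k=p_n(y)$, where I used $a_\lambda=y^{\nu_\lambda}$ and $\sum_{\nu_\lambda=k}d_\lambda=S(n,k)$. Hence $(\chi.\gamma)_\pi$ evaluates to $\prod_{B\in\pi}p_{|B|}(y)=p_\pi(y)$, and since $\chi.y.\beta.\mu\equiv(\chi.y.\beta)\mu$ the lemma combined with Proposition \ref{thm7} yields the polynomial identity $c_M(y)\simeq\sum_{\pi\in\Pi_i}p_\pi(y)\,(n.\mu)_{S_\pi}$ in $y$. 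Both sides being polynomials in $y$, I would finally replace $y$ by $\frac{\chi.\chi}{n.\chi}$ coefficientwise, introducing fresh copies of $\chi.\chi$ and $n.\chi$ uncorrelated with the power sums: the right-hand side becomes $\sum_{\pi}p_\pi\!\left(\frac{\chi.\chi}{n.\chi}\right)(n.\mu)_{S_\pi}$, while the left-hand side becomes $(\chi.\mu)_M$ by Theorem \ref{multt}, which is exactly (\ref{(multv)}).

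The two specialisation steps are routine; the hard part will be the key lemma, i.e. the multivariate form of Theorem \ref{thm2}. The difficulty is entirely combinatorial: one must reorganise the subdivision-indexed expansion (\ref{(3m)})/(\ref{(4mm)}) of $[n.(\gamma\mu)]_M$ into the mixed power-sum products $(n.\mu)_{S_\pi}$, absorbing the multiplicities $n_\pi$ by which distinct set partitions collapse to the same subdivision, and checking that the factor $\gamma$ contributes precisely the block-size cumulants packaged in $\chi.\gamma$ rather than its raw moments. A safe way to discharge this bookkeeping is to verify that the two sides agree in the $\mu^{.S}$ basis, expanding the target cumulant by (\ref{(dotprod1)}) and each $(n.\mu)_{S_\pi}$ block-by-block by (\ref{(4mm)}).
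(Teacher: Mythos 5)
Your proposal is correct and follows essentially the same route as the paper's own proof: your key lemma is precisely the paper's equivalence (\ref{(gen4.4b)}) (obtained there by rewriting Theorem \ref{thm2} in subdivision notation via (\ref{(mmob7)}) and (\ref{(eqq)}) and extending to arbitrary multisets), followed by the same specialisation $\gamma=\chi.y.\beta$ with $E[(\chi.\chi.y.\beta)^{.S_{\pi}}]=p_{\pi}(y)$, and the same final substitution $y \mapsto (\chi.\chi)/(n.\chi)$ resolved through Theorem \ref{multt} and (\ref{(dotprod1)}). The only difference is presentational: you spell out the Stirling-number computation of the cumulants of an umbra with all moments equal to $y$, which the paper asserts implicitly through the definition of $p_n$ in (\ref{(pum)}).
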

\begin{proof}
First observe that $[n.(\chi.y.\beta.\mu)]_M \equiv
[n.((\chi.y.\beta)\mu)]_M,$ so that
$$c_M(y) \simeq [n.((\chi.y.\beta)\mu)]_M.$$
We need to express $[n.((\chi.y.\beta)\mu)]_M$ in terms of power sums. To this aim,
note that, by using (\ref{(mmob7)}) and (\ref{(eqq)}), equivalence (\ref{(eee)})  can be rewritten as
\begin{equation}
[n.(\gamma \alpha)]_M \simeq \sum_{\pi \in \Pi_i} (\chi.\gamma)^{.S_{\pi}} (n.\alpha)_{S_{\pi}},
\label{(gen4.4)}
\end{equation}
where $S_{\pi}$ is a subdivision of the multiset $M=\{\alpha^{(i)}
\}.$ Replace $M$ by any multiset. Equivalence (\ref{(gen4.4)})
becomes
\begin{equation}
[n.(\gamma \mu)]_M \simeq \sum_{\pi \in \Pi_i} (\chi.\gamma)^{.S_{\pi}} (n.\mu)_{S_{\pi}}.
\label{(gen4.4b)}
\end{equation}
In equivalence (\ref{(gen4.4b)}), suppose to replace $\gamma$ by
$\chi.y.\beta,$ then
\begin{equation}
[n.((\chi.y.\beta)\mu)]_M \simeq \sum_{\pi \in \Pi_i} (\chi.\chi.y.\beta)^{.S_{\pi}} (n.\mu)_{S_{\pi}}.
\label{(gen4.4mu)}
\end{equation}
Let $S_{\pi} = \{M_1^{(g(M_1))},M_2^{(g(M_2))}, \ldots,
M_j^{(g(M_j))}\},$ then
$$(\chi.\chi.y.\beta)^{.S_{\pi}} \equiv [(\chi.\chi.y.\beta)_{M_1}]^{.g(M_1)} \cdots
[(\chi^{\prime}.\chi^{\prime}.y.\beta^{\prime})_{M_j}]^{.g(M_j)}.$$
Observe that
$$(\chi.\chi.y.\beta)_{M_i} = \prod_{\mu \in \bar{M}_i} (\chi.\chi.y.\beta)^{f(\mu)} = (\chi.\chi.y.\beta)^{|M_i|},$$
so that
$$E[(\chi.\chi.y.\beta)^{.S_{\pi}}] = [p_{\scriptscriptstyle{|M_1|}}(y)]^{g(M_1)} \cdots
[p_{\scriptscriptstyle{|M_j|}}(y)]^{g(M_j)}=p_{\pi}(y)$$ if
$S_{\pi}$ is the subdivision corresponding to the partition $\pi.$
By replacing $y$ by $(\chi.\chi)/(n.\chi),$ equivalence
(\ref{(multv)}) is proved.
\end{proof}
\par
Recall that the multivariate $k$-statistics are the unique
symmetric unbiased estimators of joint cumulants. Since these
estimators are umbrally equivalent to $(\chi.\mu)_M,$ with a
suitable choice of the multiset $M,$ the expression for
multivariate $k$-statistics in terms of power sums is given by the
right-hand side of equivalence (\ref{(multv)}).
\subsection{Multivariate polykays via compound Poisson r.v.'s}
The symmetric statistic  $k_{t_1 \ldots t_r; \ldots;\, l_1 \ldots
l_m}$ satisfying
$$E[k_{t_1 \ldots t_r; \ldots;\, l_1 \ldots l_m}]=\kappa_{t_1 \ldots t_r} \cdots
\kappa_{l_1 \ldots l_m},$$ where $\kappa_{t_1 \ldots t_r}, \ldots,
\kappa_{l_1 \ldots l_m}$ are multivariate cumulants, generalizes
polykays. As product of uncorrelated multivariate cumulants, the
umbral expression for a multivariate polykay is simply
\begin{equation}
k_{t_1 \ldots t_r; \ldots;\, l_1 \ldots l_m} \simeq (\chi.\mu)_T
\cdots (\chi^{\prime}.\mu^{\prime})_L,  \label{(pollm1)}
\end{equation}
with $\chi, \ldots, \chi^{\prime}$ being uncorrelated singleton
umbrae and $T, \ldots, L$ multisets of umbral monomials such that
$$T=\{\mu_1^{(t_1)},\ldots, \mu_r^{(t_r)}\}, \, \ldots \,, L =
\{{\mu}_1^{(l_1)},\ldots, {\mu}_m^{(l_m)}\}.$$ Also for
multivariate polykays we have given a compressed umbral formula in
terms of multivariate power sums \citep{Dinardo4}. Such a formula
has been implemented in {\tt Maple} and the resulting
computational times have been presented and discussed in
\citet{Dinardo3}. Here we generalize the procedure given for
univariate polykays, speeding up the algorithm.
\par
For plainness, in the following we just deal with two multisets
$T$ and $L,$ the generalization being straightforward.
\par
Let $N$ be the disjoint union of all submultisets respectively of
$T$ and $L$ and suppose to denote by $+$ the disjoint union of two
multisets.
\medskip \par
\begin{example}
{\rm Let $T=\{\mu_1,\mu_2\}$ and $L=\{\mu_1\}.$ The disjoint union
of all submultisets respectively of $T$ and $L$ is
$N=\{\{\mu_1,\mu_2\}, \{\mu_1\},\{\mu_2\},\{\mu_1\}\}.$ If
$T=\{\mu_1,\mu_2\}$ and $L=\{\mu_3\}$ then $N=\{ \{\mu_1,\mu_2\},
\{\mu_1\}, \{\mu_2\},  \{\mu_3\}\}.$}
\end{example}
\medskip \par
As before, we need of a polynomial umbra, indexed by a suitable
multiset, which behaves as a filter on subdivisions of $T + L,$ by
deleting those which are not disjoint unions of subdivisions
respectively of $T$ and $L.$ Suppose therefore $M_i=(\bar{M}_i,
g),$ a submultiset of $T + L.$ Let us define the umbra
$\delta_{y,N}$ satisfying
\begin{equation}
(\delta_{y,N})_{M_i} \simeq \left\{ \begin{array}{ll}
0 & \hbox{if $M_i \not \subset N$}, \\
(\chi.y.\beta)_{M_i} \simeq (\chi.y.\beta)^{|M_i|} \simeq y & \hbox{otherwise}. \\
\end{array} \right.
\label{(defu)}
\end{equation}
Let $S_{\nu}$ be a subdivision of $T + L,$ and $S_{\pi}$ and
$S_{\tau}$ subdivisions respectively of $T$ and $L,$ without
taking into account the distinct labels. Via (\ref{(defu)}), the
following equivalence results
\begin{equation}
(\delta_{y,N})^{.S_{\nu}} \simeq \left\{ \begin{array}{ll}
0 & \hbox{if} \, S_{\nu} \not < S_{\pi} + S_{\tau}, \\
(\chi.y.\beta)^{.S_{\nu}} \simeq y^{|S_{\nu}|} & \hbox{otherwise}, \\
\end{array} \right.
\label{(defu1)}
\end{equation}
where $<$ denotes the natural extension to subdivisions of the
refinement relation defined on the lattice of set partitions.
\begin{lemma}
If $\delta_{y,N}$ is the umbra defined in (\ref{(defu)}), then
\begin{equation}
[n.(\delta_{y,N} \, \mu)]_{T+L} \simeq \sum_{(\pi \in \Pi_{|T|}, \tau \in \Pi_{|L|})}
(n.\chi)^{|S_{\pi}|+|S_{\tau}|} y^{|S_{\pi}|+|S_{\tau}|} \, \mu^{.(S_{\pi}+S_{\tau})}.
\label{(exx1)}
\end{equation}
\end{lemma}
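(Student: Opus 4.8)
The plan is to prove this final Lemma by directly applying the multivariate moment expansion (\ref{(3m)}) to the multiset $T+L$, then using the filtering property (\ref{(defu1)}) of the umbra $\delta_{y,N}$ to collapse the sum over all subdivisions of $T+L$ down to the sum over disjoint unions of subdivisions of $T$ and of $L$ separately. This mirrors the univariate argument behind equivalence (\ref{(exx)}) in the earlier Lemma, but now subdivisions of multisets play the role that integer partitions played there.

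First I would start from equivalence (\ref{(3m)}) with the generic monomial $\mu$ replaced by $\delta_{y,N}\,\mu$ and the multiset taken to be $T+L$ of length $|T|+|L|$. Since $\delta_{y,N}$ and $\mu$ are uncorrelated umbrae, the factorization $[(\delta_{y,N})\mu]^{.S_{\nu}} \equiv (\delta_{y,N})^{.S_{\nu}}\,\mu^{.S_{\nu}}$ holds for every subdivision $S_{\nu}$ of $T+L$, exactly as in the proof of Proposition \ref{thm7}. This gives
$$[n.(\delta_{y,N}\,\mu)]_{T+L} \simeq \sum_{\nu \in \Pi_{|T|+|L|}} (n.\chi)^{|S_{\nu}|} (\delta_{y,N})^{.S_{\nu}} \mu^{.S_{\nu}}.$$
The next step is to invoke (\ref{(defu1)}): the factor $(\delta_{y,N})^{.S_{\nu}}$ vanishes unless $S_{\nu} < S_{\pi}+S_{\tau}$ for some subdivisions $S_{\pi}$ of $T$ and $S_{\tau}$ of $L$, and in the surviving cases it evaluates to $y^{|S_{\nu}|}$. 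Thus only the subdivisions of $T+L$ that arise as disjoint unions $S_{\pi}+S_{\tau}$ of subdivisions of the two component multisets contribute, and on each such term one has $|S_{\nu}|=|S_{\pi}|+|S_{\tau}|$ together with the multiplicativity $\mu^{.(S_{\pi}+S_{\tau})} \equiv \mu^{.S_{\pi}}\,\mu^{.S_{\tau}}$ recorded in the remark following (\ref{(1.2)}). Reindexing the surviving sum over pairs $(\pi \in \Pi_{|T|}, \tau \in \Pi_{|L|})$ then yields the right-hand side of (\ref{(exx1)}).

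The main obstacle I anticipate is the bookkeeping in the filtering step: I must verify that the umbra $\delta_{y,N}$ genuinely kills precisely those subdivisions of $T+L$ that are \emph{not} of the form $S_{\pi}+S_{\tau}$, and that the defining condition $M_i \not\subset N$ in (\ref{(defu)}) correctly characterizes this separation. Concretely, a block $M_i$ of a subdivision $S_{\nu}$ mixes umbral monomials drawn from both $T$ and $L$ exactly when $M_i$ fails to be a submultiset of any single component, which by construction of $N$ as the disjoint union of all submultisets of $T$ and of $L$ is the condition $M_i \not\subset N$. One must be careful here that the natural extension of the refinement order $<$ to subdivisions, as flagged after (\ref{(defu1)}), is the relation making $S_{\nu} < S_{\pi}+S_{\tau}$ capture this ``no mixing'' requirement; once that correspondence is established the remaining passage is a routine reindexing and an application of the multiplicative property of $\mu^{.S}$, with no further computation needed.
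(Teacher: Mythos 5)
Your proposal matches the paper's own proof essentially step for step: the paper likewise expands $[n.(\delta_{y,N}\,\mu)]_{T+L}$ over set partitions via the dot-product moment formula (it cites (\ref{(4mm)}), the same expansion you invoke through (\ref{(3m)})), factors $\delta_{y,N}^{.S_{\nu}}\mu^{.S_{\nu}}$ by uncorrelation, uses (\ref{(defu1)}) to discard every subdivision that does not split as $S_{\pi}+S_{\tau}$, and then evaluates $\delta_{y,N}^{.S_{\pi}}\simeq y^{|S_{\pi}|}$, $\delta_{y,N}^{.S_{\tau}}\simeq y^{|S_{\tau}|}$ and applies $\mu^{.S_{\pi}}\mu^{.S_{\tau}}\simeq\mu^{.(S_{\pi}+S_{\tau})}$ before reindexing over pairs $(\pi,\tau)$. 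Your closing check that $M_i\not\subset N$ characterizes exactly the blocks mixing elements of $T$ and $L$ is a correct elaboration of what the paper leaves implicit, so nothing further is needed.
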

\begin{proof}
From (\ref{(4mm)}), we have
\begin{equation}
[n.(\delta_{y,N} \, \mu)]_{T+L} \simeq \sum_{\nu \in \Pi_{|T+L|}}
(n.\chi)^{|S_{\nu}|} \, \delta_{y,N}^{.S_{\nu}} \,\,\,
\mu^{.S_{\nu}}, \label{(exx3)}
\end{equation}
since $\delta_{y,N}$ is uncorrelated with any element of $T+L.$
Due to (\ref{(defu1)}), in the sum on the right hand side of
(\ref{(exx3)}), the addends which give a non-zero contribution
are only those corresponding to subdivisions which can be split in
a subdivision of $T$ and a subdivision of $L,$ that is
\begin{equation}
[n.(\delta_{y,N} \, \mu)]_{T+L} \simeq \sum_{(\pi \in \Pi_{|T|},
\tau \in \Pi_{|L|})} (n.\chi)^{|S_{\pi}|+|S_{\tau}|} \,\,
\delta_{y,N}^{.S_{\pi}} \,\, \delta_{y,N}^{.S_{\tau}} \,\,
\mu^{.S_{\pi}}
 \, \mu^{.S_{\tau}}.
\label{(exx4)}
\end{equation}
Observing that $\delta_{y,N}^{.S_{\pi}} \simeq y^{|S_{\pi}|}$ and
$\delta_{y,N}^{.S_{\tau}} \simeq y^{|S_{\tau}|},$ the result
follows immediately since $\mu^{.S_{\pi}} \mu^{.S_{\tau}} \simeq
\mu^{.(S_{\pi}+S_{\tau})},$  due to (\ref{(defu1)}).
\end{proof}
\par
By recalling that subdivisions corresponding to different
partitions can be equal, equivalence (\ref{(exx1)}) may be
rewritten as
\begin{equation}
[n.(\delta_{y,N} \, \mu)]_{T+L} \simeq \sum_{(S_{\pi}, S_{\tau})}
n_{\pi + \tau} \,\, (n.\chi)^{|S_{\pi}|+|S_{\tau}|} \,\,
y^{|S_{\pi}|+|S_{\tau}|} \,\, \mu^{.(S_{\pi}+S_{\tau})},
\label{(exx5)}
\end{equation}
where $n_{\pi + \tau}$ is the number of set partition pairs
$(\pi,\tau)$ corresponding to subdivision $S_{\pi} + S_{\tau}.$
Assume
\begin{equation} p_{T,L}(y) = \sum_{(S_{\pi}, S_{\tau})} n_{\pi +
\tau} \, (n.\chi)^{|S_{\pi}|+|S_{\tau}|} y^{|S_{\pi}|+|S_{\tau}|}
\, \mu^{.(S_{\pi}+S_{\tau})}. \label{polmul}
\end{equation}
Thanks to (\ref{(exx5)}), the next theorem is proved by simple
calculations and allows us to express products of uncorrelated
multivariate cumulants by using the polynomials $p_{T,L}(y).$
\begin{theorem} \label{thm8}
Suppose $n_{\pi}$ (respectively $n_{\tau}$) the number of set
partitions in $\Pi_{|T|}$ (respectively $\Pi_{|L|}$) corresponding
to the subdivision $S_{\pi}$ (respectively $S_{\tau}$) and
$n_{\pi+\tau}$ the number of set partitions in $\Pi_{|T + L|}$
corresponding to the subdivision $S_{\pi} + S_{\tau}.$ If
$q_{T,L}$ is the umbral polynomial obtained from $p_{T,L}(y)$ by
replacing $y^{|S_{\pi}|+|S_{\tau}|}$ with
\begin{equation}
\frac{(\chi.\chi)^{|S_{\pi}|}(\chi^{\prime}.\chi^{\prime})^{|S_{\tau}|}}{(n.\chi)^{|S_{\pi}|+|S_{\tau}|}}
\frac{n_{\pi} n_{\tau}}{n_{\pi + \tau}},
\label{(subs1)}
\end{equation}
then
$$(\chi.\mu)_T \, (\chi^{\prime}.\mu^{\prime})_L \simeq q_{T,L}.$$
\end{theorem}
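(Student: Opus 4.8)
The plan is to expand each cumulant factor separately by the single-multiset formula (\ref{(dotprod1)}), multiply the two expansions, and then match the outcome term by term against $q_{T,L}$. Since the substitution (\ref{(subs1)}) is engineered to cancel exactly the spurious combinatorial weights, the whole argument reduces to careful bookkeeping, which is why the authors call it ``simple calculations.''

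First I would apply (\ref{(dotprod1)}) to the multiset $T$ of length $|T|$ and to $L$ of length $|L|$, obtaining
\[
(\chi.\mu)_T \simeq \sum_{\pi\in\Pi_{|T|}} (\chi.\chi)^{|S_\pi|}\,\mu^{.S_\pi}, \qquad (\chi'.\mu')_L \simeq \sum_{\tau\in\Pi_{|L|}} (\chi'.\chi')^{|S_\tau|}\,(\mu')^{.S_\tau}.
\]
Because the singletons $\chi,\chi'$ are uncorrelated and the monomials indexed by $T$ and by $L$ are uncorrelated, the product of these two umbral polynomials factorises over the independent sums; invoking the multiplicativity $\mu^{.S_\pi}(\mu')^{.S_\tau}\simeq\mu^{.(S_\pi+S_\tau)}$ recorded just after (\ref{(1.2)}), I obtain
\[
(\chi.\mu)_T\,(\chi'.\mu')_L \simeq \sum_{\pi\in\Pi_{|T|}}\sum_{\tau\in\Pi_{|L|}} (\chi.\chi)^{|S_\pi|}(\chi'.\chi')^{|S_\tau|}\,\mu^{.(S_\pi+S_\tau)}.
\]

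Next I would pass from set partitions to distinct subdivisions. Collecting all partitions $\pi$ that give rise to the same subdivision $S_\pi$ contributes the multiplicity $n_\pi$, and similarly $n_\tau$ for $L$, so the double sum regroups as a sum over distinct pairs $(S_\pi,S_\tau)$ with coefficient $n_\pi n_\tau$:
\[
(\chi.\mu)_T\,(\chi'.\mu')_L \simeq \sum_{(S_\pi,S_\tau)} n_\pi\, n_\tau\,(\chi.\chi)^{|S_\pi|}(\chi'.\chi')^{|S_\tau|}\,\mu^{.(S_\pi+S_\tau)}.
\]
It then remains to check that the right-hand side equals $q_{T,L}$. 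Performing the substitution (\ref{(subs1)}) in $p_{T,L}(y)$ of (\ref{polmul}), the factor $(n.\chi)^{|S_\pi|+|S_\tau|}$ already present cancels against the same factor in the denominator of (\ref{(subs1)}), and the coefficient $n_{\pi+\tau}$ carried by (\ref{polmul}) cancels against the denominator $n_{\pi+\tau}$ of (\ref{(subs1)}); each surviving term is precisely $n_\pi n_\tau\,(\chi.\chi)^{|S_\pi|}(\chi'.\chi')^{|S_\tau|}\,\mu^{.(S_\pi+S_\tau)}$, reproducing the display above.

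The only genuine obstacle is the bookkeeping of the three multiplicities $n_\pi$, $n_\tau$ and $n_{\pi+\tau}$. One must remember that $p_{T,L}$ inherited the weight $n_{\pi+\tau}$ from the grouping in (\ref{(exx5)}), where the filtered sum was originally indexed over the full lattice $\Pi_{|T+L|}$, whereas the natural expansion of the product of cumulants produces the weight $n_\pi n_\tau$ coming from the two independent partition lattices $\Pi_{|T|}$ and $\Pi_{|L|}$. The ratio $n_\pi n_\tau/n_{\pi+\tau}$ appearing in (\ref{(subs1)}) is exactly the device that converts the former count into the latter, so once this cancellation is verified the identification $q_{T,L}\simeq(\chi.\mu)_T\,(\chi'.\mu')_L$ is immediate.
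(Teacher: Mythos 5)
Your proposal is correct and follows essentially the same route as the paper's own proof: expand each cumulant factor via (\ref{(dotprod1)}), regroup the resulting double sum over $(\pi,\tau)$ into a sum over distinct subdivision pairs with multiplicities $n_{\pi}n_{\tau}$, and match the outcome against $p_{T,L}(y)$ after the substitution (\ref{(subs1)}), whose denominators cancel the factors $(n.\chi)^{|S_{\pi}|+|S_{\tau}|}$ and $n_{\pi+\tau}$ in (\ref{polmul}). Your write-up merely makes explicit two steps the paper leaves implicit, namely the factorisation of the product over the uncorrelated supports and the multiplicativity $\mu^{.S_{\pi}}\mu^{.S_{\tau}}\simeq\mu^{.(S_{\pi}+S_{\tau})}$.
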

\begin{proof}
Due to (\ref{(dotprod1)}), product of multivariate cumulants may be written as:
$$(\chi.\mu)_T  \,\, (\chi^{\prime}.\mu^{\prime})_L \simeq \sum_{(\pi \in \Pi_{|T|}, \tau \in \Pi_{|L|})}
\,\, (\chi.\chi)^{|S_{\pi}|}
(\chi^{\prime}.\chi^{\prime})^{|S_{\tau}|} \,\,
\mu^{.(S_{\pi}+S_{\tau})}.$$ The previous equivalence can be
rewritten as
\begin{equation}
(\chi.\mu)_T  (\chi^{\prime}.\mu^{\prime})_L \simeq \sum_{(S_{\pi}, S_{\tau})}
n_{\pi} n_{\tau} (\chi.\chi)^{|S_{\pi}|} (\chi^{\prime}.\chi^{\prime})^{|S_{\tau}|}
\mu^{.(S_{\pi}+S_{\tau})}.
\label{(compp)}
\end{equation}
The result follows by comparing the right hand side of
(\ref{(compp)}) with $p_{T,L}(y)$ in (\ref{polmul}) where
$y^{|S_{\pi}|+|S_{\tau}|}$ has been replaced by (\ref{(subs1)}).
\end{proof}
\par
Via equivalence (\ref{(gen4.4b)}), the following equivalence holds
\begin{equation}
[n.(\delta_{y,N} \mu)]_{T+L} \simeq \sum_{\nu \in \Pi_{|T+L|}} (\chi.\delta_{y,N})^{.S_{\nu}}
(n.\mu)_{S_{\nu}},
\label{(gen4.6b)}
\end{equation}
by which it is possible to express multivariate polykays in terms
of power sums. The algorithm is summarized in the following:
\begin{enumerate}
\item[{\it i)}] by equivalence (\ref{(gen4.6b)}), we evaluate $n.(\delta_{y,N} \, \mu)$
in terms of power sums in order to link the polynomials $p_{L,T}(y)$ to power sums;
\item[{\it ii)}]  we evaluate the cumulants of the umbra $\delta_{y,N}$ by means of
$$(\chi.\delta_{y,N})_{M} \simeq  \sum_{\pi \in \Pi_i} (\chi.\chi)^{|{S_\pi}|} \,
\delta_{y,N}^{.{S_\pi}}$$ which is an obvious generalization of
equivalence (\ref{(dotprod1)}); \item[{\it iii)}] we replace
occurrences of $y^{|S_{\pi}|+|S_{\tau}|}$ in
$(\chi.\delta_{y,N})_{M}$ by (\ref{(subs1)}).
\end{enumerate}
\section{Computational comparisons}
Tables 1 and 2 show comparisons of computational times among four
different software packages. The first one, which we call {\tt AS}
algorithms, has been implemented in {\tt Mathematica} and refers
to procedures explained in \citep{Andrews2}, see
http://fisher.utstat.toronto.edu/david/SCSI/chap.3.nb. The second
one refers to the package {\tt MathStatica} \citep{MathStatica}.
Note that in this package, there are no procedures devoted to
multivariate polykays. The third package, named {\tt Fast}
algorithms, has been implemented in {\tt Maple 10.x} by using the
results of this paper. The last procedure, named {\tt Polyk}, has
been described in \citep{Dinardo3}. Let us remark that, for all
the considered procedures, the results are in the same output form
and have been performed by the authors on the same platform. To
the best of our knowledge, there is no R implementation for
$k$-statistics and polykays.
\begin{table}[htbp]
\begin{center}
\small{\caption{Comparison of computational times in sec. for
$k$-statistics and polykays. Missed computational times \lq\lq
means greater than $20$ houres\rq\rq. }}
\begin{tabular}{|c|c|c|c|c|c|} \hline
$k_{t, \ldots,\, l}$ & {\tt AS} Algorithms & {\tt MathStatica} &
{\tt Fast}-algorithms & {\tt Polyk}-algorithm \\ \hline
$k_5$    &  0.06    & 0.01    &  0.01 & 0.08 \\
$k_7$    &  0.31    & 0.02    &  0.01 & 0.03 \\
$k_9$    &  1.44    & 0.04    &  0.01 & 0.16 \\
$k_{11}$ &  8.36    & 0.14    &  0.01 & 0.23 \\
$k_{14}$ & 396.39   & 0.64    &  0.02 & 1.33 \\
$k_{16}$ & 57982.40 & 2.03    &  0.08 & 4.25 \\
$k_{18}$ &  -       & 6.90    &  0.16 & 13.70 \\
$k_{20}$ &   -      & 25.15   &  0.33 & 42.26 \\
$k_{22}$ &   -      & 81.70   &  0.80 & 172.59 \\
$k_{24}$ &   -      & 359.40  &  1.62 & 647.56 \\
$k_{26}$ &   -      & 1581.05 &  2.51 & 3906.19 \\
$k_{28}$ &   -      & 6505.45 &  4.83 & 21314.65 \\
$k_{3,2}$&  0.06    & 0.02    &  0.01 & 0.02 \\
$k_{4,4}$&  0.67    & 0.06    &  0.02 & 0.06 \\
$k_{5,3}$&  0.69    & 0.08    &  0.02 & 0.07 \\
$k_{7,5}$&  34.23   & 0.79    &  0.11 & 0.70 \\
$k_{7,7}$&  435.67  & 2.52    &  0.26 & 2.43 \\
$k_{9,9}$&    -     & 27.41   &  2.26 & 23.32 \\
$k_{10,8}$&   -     & 30.24   &  2.98 & 25.06 \\
$k_{4,4,4}$& 34.17  & 0.64    &  0.08 & 0.77 \\
\hline
\end{tabular}
\end{center}
\end{table}
\par
The {\tt Polyk} algorithm, introduced in \citet{Dinardo3}, has the
advantage to give $k$-statistics, multivariate $k$-statistics,
polykays and multivariate polykays, depending on input parameters.
That is one algorithm for the whole matter. The computational
times of {\tt Polyk} are better than those of {\tt AS} algorithms
in all cases. {\tt Polyk} works better than {\tt MathStatica} for
polykays but is not competitive for $k$-statistics. {\tt
MathStatica} has not a procedure for multivariate polykays.
\begin{table}[htbp]
\begin{center}
\caption{Comparison of computational times in sec. for
multivariate $k$-statistics and multivariate polykays. For {\tt
AS} Algorithms and {\tt Polyk}-algorithm, missed computational
times means \lq\lq greater than 20 houres\rq\rq. For {\tt
MathStatica}, missed computational times means \lq\lq procedures
not available\rq\rq.}
\begin{tabular}{|c|c|c|c|c|} \hline
$k_{t_1 \ldots\, t_r;\, l_1 \ldots l_m}$ & {\small {\tt AS}
Algorithms} & {\small {\tt MathStatica}} & {\small {\tt
Fast}-algorithms} & {\small {\tt Polyk}-algorithm}
\\ \hline
$k_{3 \, 2}$        &   0.25   &  0.03 & 0.01 & 0.03 \\
$k_{4 \, 4}$        &  28.36   &  0.16 & 0.02 & 0.34 \\
$k_{5 \, 5}$        &  259.16  &  0.55 & 0.06 & 1.83 \\
$k_{6 \, 5}$        &  959.67  &  1.01 & 0.16 & 4.61 \\
$k_{6 \, 6}$        &    -     &  2.20 & 0.28 & 12.08 \\
$k_{7 \, 6}$        &    -     &  4.01 & 0.53 & 33.22 \\
$k_{7 \, 7}$        &    -     &  8.49 & 1.04 & 95.19 \\
$k_{8 \, 6}$        &    -     &  7.37 & 1.09 & 91.80 \\
$k_{8 \, 7}$        &    -     & 14.92 & 2.19 & 300.60 \\
$k_{3 \, 3 \, 3}$   & 1180.03  & 0.88  & 0.47 & 2.90 \\
$k_{4 \, 3 \, 3}$   &    -     & 2.00  & 0.40 & 9.26 \\
$k_{4 \, 4 \, 3}$   &    -     & 4.80  & 0.94 & 34.20 \\
$k_{4 \, 4 \, 4}$   &    -     & 13.53 & 2.30 & 155.03 \\
$k_{1 \, 1;\, 1 \, 1}$ & 0.05    &   -   & 0.01 & 0.01\\
$k_{2 \, 1;\, 1 \, 1}$ & 0.20    &   -   & 0.01 & 0.03 \\
$k_{2 \, 2;\, 1 \, 1}$ & 1.22    &   -   & 0.03 & 0.05 \\
$k_{2 \, 2;\, 2 \, 1}$ & 6.30    &   -   & 0.08 & 0.09 \\
$k_{2 \, 2;\, 2 \, 2}$ & 33.75   &   -   & 0.14 & 0.30 \\
$k_{2 \, 1;\, 2 \, 1;\, 2 \, 1}$   & 78.94  &  - &  0.22 & 0.45\\
$k_{2 \, 2;\, 1 \, 1;\, 1 \, 1}$   & 30.01  &  - &  0.14 & 0.20 \\
$k_{2 \, 2;\, 2 \, 1;\, 1 \, 1}$   & 126.19 &  - &  0.28 & 0.55 \\
$k_{2 \, 2;\, 2 \, 1;\, 2 \, 1}$   & 398.42 &  - &  0.55 & 1.66 \\
$k_{2 \, 2;\, 2 \, 2;\, 1 \, 1}$   & 464.45 &  - &  0.61 & 1.59\\
$k_{2 \, 2;\, 2 \, 2;\, 2 \, 1}$   & 1387.00&  - &  1.25 & 5.52\\
$k_{2 \, 2;\, 2 \, 2;\, 2 \, 2}$   & 3787.41&  - &  2.91 & 20.75 \\
\hline
\end{tabular}
\end{center}
\end{table}
\par
Finally, from Table 1 and 2, it is evident that there is a
significant improvement of computational times realized by the
{\tt Fast}-algorithms, compared to the other three packages. The
{\tt Fast}-algorithms are available at the following web page
http://www.unibas.it/utenti/dinardo/fast.pdf.

In Table 3, we quote computational times for the $k$-statistics,
the polykays and the multivariate ones given in Tables
1 and 2, obtained with forthcoming {\tt MathStatica} release 2,
by using {\tt Mathematica} 6.0, on Mac OS X, with Mac Pro
2.8GHz (Colin Rose, private communication)
\begin{table}[htbp]
\begin{center}
\small{\caption{Computational times in sec., for the $k$-statistics,
the polykays and the multivariate ones given in Tables
1 and 2, obtained with forthcoming {\tt MathStatica} release 2.}}
\begin{tabular}{|c|c|c|c|} \hline
$k_{t, \ldots,\, l}$ & {\tt MathStatica} 2 & $k_{t_1 \ldots\, t_r;\, l_1 \ldots l_m}$ &
{\tt MathStatica} 2 \\ \hline
$k_5$    &  0.008   & $k_{3 \, 2}$  &  0.012 \\
$k_7$    &  0.017   & $k_{4 \, 4}$  &  0.009 \\
$k_9$    &  0.039   & $k_{5 \, 5}$  &  0.345 \\
$k_{11}$ &  0.084   & $k_{6 \, 5}$  &  0.592 \\
$k_{14}$ &  0.329   & $k_{6 \, 6}$  &  1.230 \\
$k_{16}$ &  0.917   & $k_{7 \, 6}$  &  2.107 \\
$k_{18}$ &  2.804   & $k_{7 \, 7}$  &  4.215 \\
$k_{20}$ &  9.363   & $k_{8 \, 6}$  &  3.595 \\
$k_{22}$ &  32.11   & $k_{8 \, 7}$  &  7.359 \\
$k_{3,2}$&  0.012   & $k_{3 \, 3 \, 3}$    &  0.529 \\
$k_{4,4}$&  0.044   & $k_{4 \, 3 \, 3}$    &  2.552 \\
$k_{7,5}$&  0.434   & $k_{4 \, 4 \, 4}$    &  6.926 \\
$k_{7,7}$&  1.288   & $k_{1 \, 1;\, 1 \, 1}$   & 0.006 \\
$k_{9,9}$&  11.89   & $k_{2 \, 1;\, 1 \, 1}$   &  0.014 \\
$k_{10,8}$& 12.39   & $k_{2 \, 2;\, 1 \, 1}$   &  0.038 \\
$k_{4,4,4}$& 0.359  & $k_{2 \, 2;\, 2 \, 1}$   &  0.085 \\
           &         & $k_{2 \, 2;\, 2 \, 2}$   & 0.020 \\
           &         & $k_{2 \, 1;\, 2 \, 1;\, 2 \, 1}$   & 0.227 \\
           &         & $k_{2 \, 2;\, 1 \, 1;\, 1 \, 1}$   & 0.154 \\
           &         & $k_{2 \, 2;\, 2 \, 1;\, 1 \, 1}$   & 0.413 \\
           &         & $k_{2 \, 2;\, 2 \, 1;\, 2 \, 1}$   & 0.928 \\
           &         & $k_{2 \, 2;\, 2 \, 2;\, 1 \, 1}$   & 1.063 \\
           &         & $k_{2 \, 2;\, 2 \, 2;\, 2 \, 1}$   & 2.622 \\
           &         & $k_{2 \, 2;\, 2 \, 2;\, 2 \, 2}$   & 6.402 \\

\hline
\end{tabular}
\end{center}
\end{table}
\bigskip \bigskip

\end{document}